\newtheorem{theorem}{Theorem}
\numberwithin{theorem}{section}
\newtheorem{proposition}[theorem]{Proposition}
\newtheorem{lemm}[theorem]{Lemma}
\newtheorem{corollary}[theorem]{Corollary}
\newtheorem{remark}[theorem]{Remark}
\newtheorem{example}[theorem]{Example}
\theoremstyle{definition}
\newcommand{\PP}{\mathbb{P}}
\newcommand{\RR}{\mathbb{R}}
\newcommand{\CC}{\mathbb{C}}
\newcommand{\SSS}{\mathbb{S}}
\newcommand{\Ls}{\mathcal{L}}
\newcommand{\Linv}{\mathcal{L}^{-1}}
\DeclareMathOperator{\rad}{rad}
\DeclareMathOperator{\spa}{span}
\DeclareMathOperator{\rk}{rk}
\DeclareMathOperator{\codim}{codim}
\DeclareMathOperator{\adj}{adj}
\DeclareMathOperator{\GL}{GL}
\DeclareMathOperator{\Diag}{Diag}
\DeclareMathOperator{\Gr}{Gr}
\DeclareMathOperator{\Jo}{Jo}
 \title{Jordan Algebras of Symmetric Matrices}
  \author{Arthur Bik}
  \address{%
MPI for Mathematics in the Sciences, Leipzig \\
\email{arthur.bik@mis.mpg.de}
}
  \author{Henrik Eisenmann}
  \address{%
MPI for Mathematics in the Sciences, Leipzig \\
\email{henrik.eisenmann@mis.mpg.de}
}
\author{ Bernd Sturmfels}
\address{%
  MPI for Mathematics in the Sciences, Leipzig \\
\email{bernd@mis.mpg.edu }
}
\date{2021/05/17}
\begin{document}

\begin{abstract}
\noindent
We study linear spaces of symmetric matrices whose reciprocal is also a linear space.
These are Jordan algebras. We classify such
algebras in low dimensions, and 
we study the associated Jordan loci in the Grassmannian.
\end{abstract}
\maketitle

\section{Introduction}

Let $\SSS^n$ be the space of symmetric $n \times n$ matrices over the complex numbers $\CC$.
The $m$-dimensional subspaces $\mathcal{L} \subset \SSS^n$
are points in the Grassmannian ${\rm Gr}(m,\SSS^n)$. The terms
{\em pencil of quadrics} \cite{FMS}  resp.~{\em net of quadrics}~\cite{Wall77} are used when $m=2,3$.
We here consider {\em regular}  $\mathcal{L}$, i.e. 
$\mathcal{L}_{\rm inv} = \{ X \in \mathcal{L} \mid {\rm det}(X) \not = 0 \}$ is nonempty.
The {\em reciprocal variety} $\mathcal{L}^{-1}$ is the closure in $\SSS^n$ of the set 
$\{X^{-1} \mid X \in \mathcal{L}_{\rm inv}\}$.
We wish to know under which circumstances
the variety $\mathcal{L}^{-1}$ is a linear space in $\SSS^n$.
The motivation for this question arises in optimization,
namely in the theory of semidefinite programming \cite{Fay, pablo},
and in statistics, namely from  Gaussian models
that are linear in covariance matrices and  concentration matrices \cite{Jen, STZ}.

\smallskip

The following theorem furnishes a complete answer to our question.\vspace{-0.2cm}

\begin{theorem} \label{thm:characterization}
For $\mathcal{L} \in {\rm Gr}(m,\SSS^n)$ and $U\in\Ls_{\rm inv}$,
 the following are equivalent: \vspace{-0.17cm}
 \begin{itemize}
 \item[(a)] The reciprocal variety $\mathcal{L}^{-1}$ is
 also a linear space in $\SSS^n$. \vspace{-0.17cm}
 \item[(b)] $\mathcal{L}$ is a subalgebra 
of the Jordan algebra $(\SSS^n,\bullet_U)$.\vspace{-0.17cm}
  \item[(c)] $\mathcal{L}^{-1}$ equals $\mathcal{L}$ up to congruence; namely
  $\Linv=U^{-1}\Ls U^{-1}$.
 \end{itemize}
\end{theorem}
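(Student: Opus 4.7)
My plan is to prove four implications: (c) $\Rightarrow$ (a), (a) $\Rightarrow$ (c), (b) $\Rightarrow$ (c), and (c) $\Rightarrow$ (b).

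The implication (c) $\Rightarrow$ (a) is immediate, since $U^{-1}\Ls U^{-1}$ is the image of the linear space $\Ls$ under the invertible linear map $X \mapsto U^{-1}XU^{-1}$ on $\SSS^n$, hence is itself a linear subspace.

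For (a) $\Rightarrow$ (c), I would use a tangent-space computation. Matrix inversion restricts to an injective morphism $\Ls_{\rm inv} \to \SSS^n$ whose image is Zariski-dense in $\Linv$, so $\dim \Linv \leq m$. Its differential at $U$ is $V \mapsto -U^{-1}VU^{-1}$, and the image $U^{-1}\Ls U^{-1}$ of this map lies inside the Zariski tangent space to $\Linv$ at $U^{-1}$. If $\Linv$ is assumed to be a linear subspace of $\SSS^n$, then its tangent space at any of its points equals $\Linv$ itself, so $U^{-1}\Ls U^{-1} \subseteq \Linv$; the reverse inclusion follows by dimension, giving $\Linv = U^{-1}\Ls U^{-1}$.

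For (b) $\Leftrightarrow$ (c) the key fact is that $U$ is the multiplicative identity of the Jordan algebra $(\SSS^n, \bullet_U)$ and the Jordan inverse of an invertible element $X$ equals $UX^{-1}U$. For (b) $\Rightarrow$ (c): a unital Jordan subalgebra is closed under Jordan inversion, which can be seen by applying Cayley-Hamilton to $U^{-1}X$ and rewriting $UX^{-1}U$ as a linear combination of $U$ and the Jordan powers $X, \, X \bullet_U X, \ldots$, all of which lie in $\Ls$. Hence every invertible $X \in \Ls$ satisfies $X^{-1} \in U^{-1}\Ls U^{-1}$, and taking Zariski closures together with a dimension count yields $\Linv = U^{-1}\Ls U^{-1}$. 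For (c) $\Rightarrow$ (b): fix $X \in \Ls_{\rm inv}$, note that $U(X+tU)^{-1}U \in \Ls$ for generic $t$ by (c), and expand as a Neumann series in $t$. Each Taylor coefficient at $t = 0$ lies in the closed subspace $\Ls$; the coefficient of $t^1$ equals $-WU^{-1}W$ where $W = UX^{-1}U \in \Ls$, so $W \bullet_U W \in \Ls$. As $X$ ranges over $\Ls_{\rm inv}$, the element $W$ ranges over all of $\Ls_{\rm inv}$ (hypothesis (c) also lets us invert the correspondence), a Zariski-dense subset of $\Ls$. Since the condition $W \bullet_U W \in \Ls$ is Zariski closed, it holds for every $W \in \Ls$, and polarization delivers closure under the full product $\bullet_U$.

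The main obstacle I anticipate is (c) $\Rightarrow$ (b): the hypothesis provides closure of $\Ls$ only under the inversion-type operation $X \mapsto UX^{-1}U$, whereas (b) demands closure under the quadratic Jordan product $\bullet_U$. The Neumann-series differentiation is what bridges this gap by extracting a quadratic operation from an inversion constraint, and Zariski density then promotes the identity from invertible elements to all of $\Ls$.
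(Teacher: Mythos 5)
Your proposal is correct. It differs from the paper's proof mainly in structure: the paper closes a single cycle $(c)\Rightarrow(a)\Rightarrow(b)\Rightarrow(c)$, whereas you prove $(a)\Leftrightarrow(c)$ and $(b)\Leftrightarrow(c)$ separately, using condition (c) as the hub. The genuinely new ingredient in your write-up is the tangent-space argument for $(a)\Rightarrow(c)$: since inversion $X\mapsto X^{-1}$ is an immersion with differential $V\mapsto -U^{-1}VU^{-1}$ at $U$, the $m$-dimensional space $U^{-1}\Ls U^{-1}$ lands inside $T_{U^{-1}}\Linv$, and a linear variety is smooth with tangent space equal to itself, forcing $U^{-1}\Ls U^{-1}=\Linv$ by dimension. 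The paper does not make this geometric observation; it instead establishes $(a)\Rightarrow(b)$ directly by differentiating the Neumann series for $(X-tU)^{-1}$ to extract $X^{-1}UX^{-1}\in\Linv$, and only then gets (c) from (b) via Cayley--Hamilton. Your two remaining nontrivial implications are essentially the paper's in disguise: your $(b)\Rightarrow(c)$ is the same Cayley--Hamilton argument (applied to $U^{-1}X$ rather than to $XU^{-1}$), and your $(c)\Rightarrow(b)$ is the same Neumann-series differentiation, just stated with the explicit identity $\Linv=U^{-1}\Ls U^{-1}$ in hand rather than mere linearity of $\Linv$. The net effect is that you prove four implications where the paper proves three, but your route to (c) from (a) sidesteps the slightly delicate step in the paper where one must argue that the inverse of $X^{-1}UX^{-1}\in\Linv$ again lies in $\Ls$; knowing (c) up front makes that transparent.
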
\vspace{-0.05cm}

Theorem \ref{thm:characterization} is due to Jensen \cite{Jen}
in the special case when $\mathcal{L}$ contains the
 identity matrix ${\bf 1}_n$. He then uses
$U = {\bf 1}_n$ for the Jordan algebra structure on $\SSS^n$.\vspace{-0.2cm}

\begin{example}[$m=n=4$] 
Consider the two linear subspaces of $\SSS^4$ given by \vspace{-0.2cm}
$$  \mathcal{L}_1 \,\,=\,\,
\begin{small} \begin{pmatrix}  x & y & 0 & 0 \\
 y & z & 0 & 0 \\
 0 &  0 & w & 0 \\
 0 & 0 & 0 & w \end{pmatrix}\end{small} \quad {\rm and} \quad
\mathcal{L}_2 \,\,= \,\, \begin{small}
\begin{pmatrix}  x & y & 0 & w \\
 y & z & -w & 0 \\
 0 & -w & x & y \\
 w & 0 & y & z \end{pmatrix} \end{small} \quad\,
 {\rm with}\,\,\,\, U = {\bf 1}_4.
  \vspace{-0.2cm}
$$
They satisfy (a),(b),(c) in Theorem \ref{thm:characterization}.
Note that $\mathcal{L}_2$ appears in \cite[eqn (39)]{Jen}. It illustrates
case (ii) in \cite[Theorem 1]{Jen}. The Jordan algebra property fails
when $-w$ is replaced by $w$ in $\mathcal{L}_2$. 
The reciprocal variety is a threefold of degree five.
\end{example}\vspace{-0.1cm}

The proof of Theorem~\ref{thm:characterization} is 
relatively short. It will be presented in
 Section~\ref{sec2}, along with all relevant definitions.
We also review classification results for Jordan algebras \cite{Jordan_1934}.
This lays the foundation for studying
the {\em Jordan locus} ${\rm Jo}(m,\SSS^n)$. 
This is the subvariety of
the Grassmannian ${\rm Gr}(m,\SSS^n)$ whose
points are the subspaces $\mathcal{L}$ 
that satisfy the closed conditions (a), (b) and (c)  above.
To be precise, let $X_1,\ldots,X_m$ represent a basis of $\Ls$ and write $\adj(U)$ for the adjoint of $U$.
From condition (b) it can be seen that $\Jo(m,\SSS^n)$ is the subvariety set-theoretically defined (in dual Stiefel coordinates) by the linear dependence~of \vspace{-0.24cm}
\[
X_1,X_2,\ldots,X_m, \,\,X_i\adj(U)X_j+X_j\adj(U)X_i\vspace{-0.1cm}
\] 
for all matrices $U\in\Ls$ and all  indices $1\leq i\leq j\leq m$.

The main goal of this article is the classification of
low-dimensional Jordan subalgebras of $\SSS^n$
and the description of the corresponding Jordan loci for small~$n$.
In Section \ref{sec3} we determine the loci of Jordan pencils $(m=2)$ and
we study their equations.
Section \ref{sec4} establishes a
lower bound on the codimension of any proper subalgebra of $(\SSS^n ,\bullet_U)$,
and it identifies the Jordan copencils $(m=4,n=3)$. In Section~\ref{sec5} we classify Jordan subalgebras of dimension $m=3$.
This is applied in Section~\ref{sec6} to study the Jordan loci ${\rm Jo}(3,\SSS^n)$.
We emphasize the explicit computation of their irreducible components and 
defining polynomials. One finding of independent interest is the
Chow matrix in Theorem \ref{thm:chow}
which represents the Chow form of the
determinantal variety  $\{ X \in \SSS^n \mid  \rk X  \leq n-2 \}$.

\section{Jordan algebras}
\label{sec2}

Fix an invertible matrix $U \in \SSS^n$. We define an algebra structure on $\SSS^n$ by setting\vspace{-0.25cm}
\begin{equation} \label{eq:bulletdef} X\bullet_UY \,\,\,:= \,\,\, \frac{1}{2}(XU^{-1}Y\,+\,YU^{-1}X)
\qquad \hbox{for all $X,Y \in \SSS^n$}.\vspace{-0.25cm}
\end{equation}
When $U$ is clear from context, we drop the subscript.
The product $\bullet$ is bilinear and commutative, with unit $U$.
Associativity fails, but, setting $\,X^{\bullet 2} \,:= \, X \bullet X$,
the following weaker version holds.
This is the axiom for a (unital) {\em Jordan algebra}:\vspace{-0.15cm}
\begin{equation} 
\label{eq:jordanaxiom}
X^{\bullet2} \bullet (X \bullet Y) \quad = \quad
X \bullet (X^{\bullet2} \bullet Y) .\vspace{-0.15cm}
\end{equation}
The role of the unit can be played by any matrix $U \in \SSS^n_{\rm inv}$.
 The resulting Jordan algebra  structure 
  is denoted by $(\SSS^n, \,\bullet_U)$. 
  Especially important for applications, notably in  statistics, optimization
  and physics, is the case  when $U$ is real and positive definite.
 Such  algebras  over $\RR$ are known as
{\em Euclidean Jordan algebras}.

\begin{remark} \label{rmk:Knotclose} 
Over the field $\RR$, the isomorphism
type of the Jordan algebra $(\SSS^n,\,\bullet_U)$ depends on the choice of $U$. To see this, 
let  $n=2$ and write $E_{ij}$ for the matrix units.
  If $U = {\bf 1}_2 = E_{11} + E_{22}$ then $(\SSS^2,\,\bullet_U)$
 has no nilpotent elements.  However, if $\,U' = E_{12} + E_{21}\, $ then the matrices
 $E_{11}$ and $E_{22}$ are nilpotent in   $(\SSS^2,\,\bullet_{U'})$.
\end{remark}

We return to this issue later: over $\CC$, the isomorphism type of $(\SSS^n,\,\bullet_U)$ is in fact independent of $U$.
First, let us prove the result in the 
introduction.

\begin{proof}[Proof of Theorem \ref{thm:characterization}]
Clearly, (c) implies (a). 
Suppose that (a) holds, so $\Linv$ is a linear space.
We shall prove that (b) holds.
Consider invertible matrices   $U,X\in\Ls$. For small $t \in \RR$, 
the inverse of $X-tU$  is given by the Neumann series\vspace{-0.25cm}
$$ 
(X-tU)^{-1}\,=\,X^{-1}({\bf 1}_n -tUX^{-1})^{-1}\,=\,
X^{-1}\sum_{k=0}^\infty t^k(UX^{-1})^k \,\,\,\,\in\, \,\, \mathcal{L}^{-1}.\vspace{-0.25cm}
$$
 Since $\Linv$ is linear and closed, the following matrix is in $\mathcal{L}^{-1}$ as well:\vspace{-0.25cm}
 $$
 X^{-1}UX^{-1} \,\, = \,\,\,
\lim_{t\to 0} \, \frac{1}{t}\bigl((X-tU)^{-1}-X^{-1}\bigr) \,\,\,\,\in\, \,\, \mathcal{L}^{-1}.\vspace{-0.25cm}
$$
Its inverse $X\bullet_{U}X$ is in $\Ls$. Since invertible $X$ are dense in 
$\Ls$, this conclusion holds for all $X\in\Ls$. Condition (b) states that $X,Y \in \mathcal{L}$
implies $X \bullet_U Y \in \mathcal{L}$. Let $X,Y\in\Ls$. Then the Jordan algebra 
squares $ (X+Y)^{\bullet2}, X^{\bullet2},  Y^{\bullet2}$ are in $\mathcal{L}$,
and we conclude that $ \,X \bullet_U Y  = \frac{1}{2} ((X+Y)^{\bullet2} - X^{\bullet2} -  Y^{\bullet2} )\,$
is also in $\mathcal{L}$. So (b) holds.

Assume (b) and consider any invertible $X\in\Ls U^{-1}$.
Then $XU\bullet_{U}XU = X^2 U$ is in $\Ls$.
Next, we see that $X^3 U =X^2U\bullet_{U}XU\in \Ls$.
By iterating this, every power $X^k$ lies in $\Ls U^{-1}$.  Since
the characteristic polynomial of $X$ has nonzero constant term, 
the Cayley-Hamilton Theorem implies $X^{-1} \in\Ls U^{-1}$.
Since invertible matrices are dense in $\Ls U^{-1}$, we get
$\Ls U^{-1} = (\Ls U^{-1})^{-1} = U\Linv$, i.e.~(c) holds.
\end{proof}

We next review material from the theory of abstract Jordan algebras.
Already in 1934, Jordan, von Neumann and Wigner~\cite{Jordan_1934}
 established a structure theorem for Euclidean Jordan algebras.
We state a variant that holds over $\CC$.
An {\em ideal} $\mathcal{I}$ in a Jordan algebra $\mathcal{A}$ is a subspace such that 
$X\bullet Y\in\mathcal{I}$ for all $X \in\mathcal{I}, \,Y \in\mathcal{A}$. The {\em radical} of $\mathcal{A}$ is
the ideal $\rad \mathcal{A}:=\{X\in \mathcal{A}\mid X\bullet Y \textit{ is nilpotent for  all $Y \in \mathcal{A}$}\}$.

\begin{theorem}\label{thm: strucure Jordan Algebras}
For a finite-dimensional complex Jordan algebra $\mathcal{A}$,
 the quotient  $\mathcal{A}/\rad \mathcal{A}$ is a direct sum of
simple Jordan algebras. Every simple Jordan algebra of dimension $\leq 8$ is  isomorphic to either $(\CC, \, \cdot \,)$, or
to $(\SSS^3, \bullet_{{\bf 1}_3})$, or to a {\em spin factor} $\CC \times \CC^d$,
 $d\geq 2$, with product $(\lambda,a)\bullet (\mu,b)=(\lambda\mu+a^\top b,\lambda b+\mu a)$
 and unit $(1,0)$.
\end{theorem}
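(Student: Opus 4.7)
The plan is to treat the two assertions separately, each by invoking classical structure theory for finite-dimensional Jordan algebras, much as one proves Wedderburn's theorem for associative algebras.

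For the first assertion, I would begin by verifying that $\rad\mathcal{A}$ is in fact a nilpotent ideal (the Jordan analog of the Jacobson radical), so that $\bar{\mathcal{A}}:=\mathcal{A}/\rad\mathcal{A}$ is semisimple in the sense of containing no nonzero nilpotent ideal. Working in characteristic zero, the generic trace form $(X,Y)\mapsto \mathrm{tr}(L_{X\bullet Y})$, where $L_X$ denotes left multiplication by $X$, is nondegenerate on $\bar{\mathcal{A}}$. One can then split off minimal ideals as orthogonal complements and iterate, exhibiting $\bar{\mathcal{A}}$ as a direct sum of simple Jordan ideals. This is the Jordan-algebra Wedderburn principal theorem.

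For the second assertion, I would quote the Albert-Jacobson classification of finite-dimensional simple Jordan algebras over an algebraically closed field of characteristic zero. Up to isomorphism, every such algebra is one of the following: the symmetric matrix algebra $(\SSS^n,\bullet_{\mathbf{1}_n})$ of dimension $\binom{n+1}{2}$; the full matrix Jordan algebra $(M_n(\CC),\bullet_{\mathbf{1}_n})$ of dimension $n^2$; the quaternionic-Hermitian algebra $\mathcal{H}_n(\mathbb{H}_\CC)$ of dimension $n(2n-1)$; a spin factor $\CC\times\CC^d$ ($d\geq 2$) of dimension $d+1$; or the $27$-dimensional Albert algebra. The bound $\dim\leq 8$ discards the Albert algebra and every member of the first three families except $\CC=\SSS^1$ and $\SSS^3$ itself. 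The remaining small classical examples are absorbed into the spin-factor family via explicit Jordan isomorphisms $\SSS^2\cong\CC\times\CC^2$, $M_2(\CC)\cong\CC\times\CC^3$, and $\mathcal{H}_2(\mathbb{H}_\CC)\cong\CC\times\CC^5$, each given by decomposing a matrix $X$ into $\bigl(\tfrac{1}{n}\mathrm{tr}(X),\,X-\tfrac{1}{n}\mathrm{tr}(X)\cdot\mathbf{1}_n\bigr)$ and checking the product formulas using the Cayley-Hamilton relation in rank two.

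The substantive obstacle is the classification itself, whose full proof relies on the Peirce decomposition with respect to a complete system of primitive idempotents together with Hurwitz's theorem on composition algebras to identify the coordinate algebra of the off-diagonal Peirce spaces. Rather than reproduce that long chain of reductions, I would cite \cite{Jordan_1934} for the original Jordan-von Neumann-Wigner classification of formally real (Euclidean) Jordan algebras and observe that complexification transfers the list verbatim to $\CC$, since every complex simple Jordan algebra on the list arises as the complexification of its compact real form.
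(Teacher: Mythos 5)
Your proposal is correct and takes essentially the same route as the paper, whose entire proof consists of citing the known structure theory of semisimple complex Jordan algebras (Braun--Koecher or McCrimmon) and leaving the details implicit. You merely supply more of those details: the nondegenerate-trace-form argument behind the Wedderburn-type decomposition, the Albert--Jacobson list of simple algebras over an algebraically closed field, the dimension filter, and the explicit low-rank coincidences $\SSS^2\cong\CC\times\CC^2$, $M_2(\CC)^+\cong\CC\times\CC^3$, $\mathcal{H}_2(\mathbb{H}_\CC)\cong\CC\times\CC^5$ that absorb the small classical cases into the spin factors.
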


\begin{proof}
We find this by applying known structure theorems to semi-simple Jordan algebras of dimension $\leq 8$.  See
 \cite[Ch.~X, \S 3.3]{BraunKoecher1966} or \cite[Part~I, \S 2.13]{McCrimmon2004}. \end{proof}

Theorem~\ref{thm: strucure Jordan Algebras} will be used to classify low-dimensional Jordan algebras.
First, however, we highlight the difference between $\RR$ and $\CC$
concerning Remark~\ref{rmk:Knotclose}.

\begin{lemm}\label{lemma: squareroot}
Square roots exist in any  complex Jordan subalgebra $\mathcal{L}\subset\SSS^n$ with unit $U$.
To be precise, for every matrix $X\in\mathcal{L}_{\rm inv}$ there exists $Y\in\mathcal{L}$ with $Y^{\bullet2}=X$.
\end{lemm}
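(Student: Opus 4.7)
The plan is to reduce the existence of a square root to the classical fact that every invertible complex matrix admits a polynomial square root. Given $X\in\Ls_{\rm inv}$, I set $A:=XU^{-1}$, which is invertible since both $X$ and $U$ are. Finding $Y\in\Ls$ with $Y\bullet_U Y=X$ is equivalent to finding $B\in\Ls U^{-1}$ (with respect to the ordinary matrix product) satisfying $B^2=A$, for then $Y:=BU$ lies in $\Ls$ and
\[
Y\bullet_U Y \,=\, YU^{-1}Y \,=\, BUU^{-1}BU \,=\, B^2 U \,=\, AU \,=\, X.
\]

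The crucial step is to show that the subspace $\Ls U^{-1}\subset\CC^{n\times n}$ is closed under polynomial evaluation in a single element. The closure under squaring is exactly the computation that appears in the proof of Theorem~\ref{thm:characterization}((b)$\Rightarrow$(c)): for any $Z\in\Ls U^{-1}$ we have $ZU\in\Ls$ and $ZU\bullet_U ZU=Z^2U$, so $Z^2\in\Ls U^{-1}$. An easy induction (using $Z^{k+1}U=Z^kU\bullet_U ZU$) gives $Z^k\in\Ls U^{-1}$ for all $k\geq 1$, and together with the fact that $\Ls U^{-1}$ is a linear subspace this yields $p(Z)\in\Ls U^{-1}$ for every $p\in\CC[t]$.

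I then apply this to $A=XU^{-1}$. Because $A$ is invertible and $\CC$ is algebraically closed, there exists a polynomial $p\in\CC[t]$ with $p(A)^2=A$; concretely, one constructs $p$ by Hermite interpolation of a chosen branch of $\sqrt{t}$ and its derivatives at each eigenvalue of $A$, to order equal to the size of the largest Jordan block there, which is the content of the holomorphic functional calculus applied to a holomorphic square root on a neighborhood of the (nonzero) spectrum of $A$. Setting $B:=p(A)\in\Ls U^{-1}$ and $Y:=BU\in\Ls$ then gives $Y^{\bullet 2}=X$, as computed above.

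The main conceptual work is isolated in the closure statement for $\Ls U^{-1}$ under arbitrary polynomials of a single element; once that is in hand, the polynomial square root over $\CC$ is a standard fact and requires no symmetry of the intermediate matrix $A$ (which is not symmetric in general). The argument also makes clear why the statement can fail over $\RR$, as foreshadowed in Remark~\ref{rmk:Knotclose}: the obstruction is precisely that a real invertible matrix need not admit a real polynomial square root.
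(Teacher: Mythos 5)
Your proof is correct, and it takes a genuinely different route to the square root than the paper does. Both arguments share the same key reduction: you work in $\Ls U^{-1}$, which is closed under powers of a single element (exactly the computation from the (b)$\Rightarrow$(c) step of Theorem~\ref{thm:characterization}), hence under arbitrary polynomials; the paper instead isolates the commutative associative subalgebra $\mathcal{B}=\spa\{X^{\bullet\ell}\mid\ell\geq 0\}$, which is the same object transported by $\cdot\,U$. The divergence is in how each argument then manufactures the square root. The paper stays inside Jordan-algebra theory: it invokes Theorem~\ref{thm: strucure Jordan Algebras} to see $\mathcal{B}/\rad\mathcal{B}\cong\CC^r$, lifts a square root modulo the radical, and corrects the lift using the formal Taylor series of $\sqrt{1+\lambda}$ truncated by nilpotence. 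You instead import the classical associative fact that an invertible complex matrix $A$ has a polynomial square root (holomorphic functional calculus on a neighborhood of the spectrum, realized by Hermite interpolation), which kills the problem in one step once closure under polynomials is in hand. Your route is shorter and more elementary — it avoids both the structure theorem and the radical/Taylor-series manipulation — at the cost of stepping outside the Jordan-theoretic framework the paper is building up; in effect, you are reproving the special case of the structure theory that the paper quotes, via the spectral decomposition of $A=XU^{-1}$. Your closing observation about $\RR$ is also apt: the failure of a real polynomial square root (e.g.\ for $A=-{\bf 1}_n$) is precisely the obstruction behind Remark~\ref{rmk:Knotclose}.
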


\begin{proof}
The subalgebra $\mathcal{B}=\spa\{X^{\bullet\ell}\mid \ell\geq0\}$ of $\mathcal{L}$ is associative as $Y\bullet Z= YU^{-1}Z$ $=ZU^{-1}Y$ for all $Y,Z\in\mathcal B$. By \cite[Part~I, \S 2.13]{McCrimmon2004}, the only associative simple Jordan algebra in $\CC$. So by Theorem~\ref{thm: strucure Jordan Algebras}, the quotient $\mathcal{B}/\rad \mathcal{B}$ is a direct sum of copies of $\CC$. In particular, every element of $\mathcal{B}/\rad \mathcal{B}$ is a square. So there exist  $Z \in \mathcal{B}$ and $W\!\in \rad \mathcal{B}$ with $Z^{\bullet2}+W=X$. We have $X^{-1}\in U^{-1}\mathcal{B}U^{-1}$ and so, since $W\in\rad\mathcal{B}$, the matrix $WX^{-1}$ is nilpotent. Hence the matrix $X-W = Z^{\bullet2}$ is invertible and using Theorem~\ref{thm:characterization} we can take $Z^{\bullet-2}:= U\bullet_{Z^{\bullet 2}}U\in\mathcal{B}$ which satisfies $Z^{\bullet2}\bullet Z^{\bullet -2}=U$.
 Since $W\in\rad \mathcal{B}$, the element $Z^{\bullet-2}\bullet W$ is nilpotent, say $(Z^{\bullet-2}\bullet W)^{\bullet p}=0$. Using the Taylor series
 $\sqrt{1+\lambda}= \sum_{\ell=0}^\infty a_\ell \lambda^\ell $, we~see that\vspace{-0.35cm}
$$ 
\begin{matrix}
X \,\,\,=\,\,\,Z^{\bullet2}\bullet\left(U+Z^{\bullet-2}\bullet W\right)\,\,\,=\,\,\,
Z^{\bullet2}\bullet \left(\,\sum_{\ell=0}^{p-1} a_\ell \left(Z^{\bullet-2}\bullet W\right)^{\bullet\ell}\right)^{\bullet2} . \end{matrix}\vspace{-0.25cm}
$$
Hence $X$ has a square root $Y$ in the subalgebra $\mathcal{B}$ of $\mathcal{L}$.
\end{proof}

Having access to a square root within the algebra, we can find isomorphisms 
between $(\Ls,\bullet_U)$ and $(\Ls,\bullet_V)$ for different choices of  matrices $U,V\in\Ls_{\rm inv}$.

\begin{proposition} \label{prop:compU} Consider a complex Jordan algebra $(\Ls,\bullet_U)$
as in Theorem~\ref{thm:characterization}. There is an isomorphism
    $(\Ls,\bullet_U)\cong(\Ls,\bullet_V)$ of Jordan algebras for all $V \in \Ls_{\rm inv}$.
\end{proposition}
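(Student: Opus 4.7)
The plan is to construct an explicit isomorphism using a $\bullet_U$-square root of $V$. By Lemma~\ref{lemma: squareroot}, since $V\in\Ls_{\rm inv}$, there is $W\in\Ls$ with $W\bullet_U W=V$; equivalently, $WU^{-1}W=V$. As a matrix $W$ is then invertible (because $V$ is). The candidate isomorphism is the linear map
\[
\phi\colon\Ls\to\Ls,\qquad \phi(X)\,:=\,WU^{-1}XU^{-1}W.
\]
Note that $\phi(U)=WU^{-1}W=V$, so the $\bullet_U$-unit is sent to the $\bullet_V$-unit, as one would hope.

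The first task is to check that $\phi$ really maps $\Ls$ into $\Ls$. A short expansion of (\ref{eq:bulletdef}) identifies $\phi$ with the quadratic representation of $W$ in the Jordan algebra $(\Ls,\bullet_U)$:
\[
\phi(X)\,=\,2\,W\bullet_U(W\bullet_U X)\,-\,W^{\bullet 2}\bullet_U X.
\]
Since $\Ls$ is a Jordan subalgebra containing $W$, the right-hand side lies in $\Ls$.

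Next I would verify that $\phi$ is multiplicative from $(\Ls,\bullet_U)$ to $(\Ls,\bullet_V)$. Using $V^{-1}=W^{-1}UW^{-1}$, the internal $W$'s and $W^{-1}$'s collapse to give
\[
\phi(X)\,V^{-1}\,\phi(Y) \,=\, W U^{-1} X U^{-1} Y U^{-1} W,
\]
and by symmetry in $X,Y$ one obtains $\phi(X)\bullet_V\phi(Y)=WU^{-1}(X\bullet_U Y)U^{-1}W=\phi(X\bullet_U Y)$. Finally, $\phi$ is invertible on $\Ls$ with inverse $Z\mapsto UW^{-1}ZW^{-1}U$; by the same quadratic-representation trick, the inverse lands in $\Ls$, because $W^{-1}\in\Ls^{-1}=U^{-1}\Ls U^{-1}$ by Theorem~\ref{thm:characterization}(c), so $UW^{-1}U\in\Ls$ and the inverse is the $\bullet_U$-quadratic representation of $UW^{-1}U$.

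The only real obstacle is guessing the correct map; once one sees that Lemma~\ref{lemma: squareroot} provides a $\bullet_U$-square root of $V$ and that the quadratic representation is the natural intertwiner, each of the three checks (range inside $\Ls$, multiplicativity, bijectivity) reduces to a mechanical associative-matrix calculation.
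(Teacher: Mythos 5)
Your proof is correct and matches the paper's argument almost exactly: both use Lemma~\ref{lemma: squareroot} to produce $W$ with $WU^{-1}W=V$ and then take $\phi(X)=WU^{-1}XU^{-1}W=VW^{-1}XW^{-1}V$; the paper expresses $\phi$ as the $\bullet_W$-quadratic representation of $V$ while you express the same map as the $\bullet_U$-quadratic representation of $W$, but the map and the verification that it is a Jordan isomorphism are identical. Your explicit description of $\phi^{-1}$ as a quadratic representation is a nice touch, though strictly unnecessary since an injective linear endomorphism of the finite-dimensional space $\Ls$ is automatically surjective.
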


\begin{proof}
By Lemma \ref{lemma: squareroot}, there exists $W\in\Ls_{\rm inv}$ with $WU^{-1}W=V$.
Consider the~map\vspace{-0.15cm}
\[
\phi : \Ls \rightarrow \Ls \,, \,\,\,
X\,\,\mapsto \,\,VW^{-1}XW^{-1}V\,\,=\,\,2V\bullet_W(X\bullet_WV)-(V\bullet_WV)\bullet_W X. \vspace{-0.15cm}
\]
This linear automorphism of $\Ls$ 
is  invertible since $VW^{-1}$ is invertible.  We have\vspace{-0.15cm}
\[
\phi(XU^{-1}X)
\,=\,\phi(X W^{-1} V W^{-1} X)
\,=\,VW^{-1}XW^{-1}VW^{-1}XW^{-1}V
\,=\,\phi(X)V^{-1}\phi(X). \vspace{-0.15cm}
\]
This shows that $\phi$ is a Jordan algebra isomorphism from $(\Ls,\bullet_U)$ to $(\Ls,\bullet_V)$.
\end{proof}

We now come to the classification of low-dimensional Jordan algebras. 

\begin{lemm} \label{lem:k+1}
If $k=\dim(\rad\mathcal{A})$, then $X^{\bullet k+1}=0$ for all $X\in\rad\mathcal{A}$.
\end{lemm}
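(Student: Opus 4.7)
The plan is to exhibit a polynomial relation among the Jordan powers of $X$ by a dimension count in $\rad\mathcal{A}$, then to factor out an initial power and invert the remaining factor.

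First I would record three preliminaries. Setting $Y=U$ in the definition of the radical shows that every $X\in\rad\mathcal{A}$ is itself nilpotent, so $X^{\bullet q}=0$ for some $q\geq 1$. Since $\rad\mathcal{A}$ is an ideal, an induction using $X^{\bullet(m+1)}=X\bullet X^{\bullet m}$ shows $X^{\bullet m}\in\rad\mathcal{A}$ for all $m\geq 1$. Finally, the subalgebra $\mathcal{B}\subseteq\mathcal{A}$ generated by $U$ and $X$ is associative and commutative. This is power-associativity, a standard consequence of the Jordan axiom \eqref{eq:jordanaxiom}; in our setting it also follows directly from the formula $X^{\bullet m}=(XU^{-1})^{m-1}X$, which turns Jordan multiplication in $\mathcal{B}$ into ordinary polynomial multiplication in $X$ with unit $U$.

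The main step is a dimension count. The $k+1$ elements $X,X^{\bullet 2},\ldots,X^{\bullet(k+1)}$ all lie in the $k$-dimensional space $\rad\mathcal{A}$, hence satisfy a nontrivial linear relation $\sum_{i=1}^{k+1}c_iX^{\bullet i}=0$. Let $i_0$ be the smallest index with $c_{i_0}\neq 0$ and rewrite this relation, inside $\mathcal{B}$, as
$$X^{\bullet i_0}\bullet P(X)\,=\,0,\qquad P(X)\,:=\,c_{i_0}U+\sum_{i>i_0}c_iX^{\bullet(i-i_0)}.$$

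The conclusion comes from inverting $P(X)$. Its non-scalar part is a polynomial in $X$ without constant term, hence nilpotent because $X$ is; so $P(X)=c_{i_0}U+N$ with $N$ nilpotent and $c_{i_0}\neq 0$, which a terminating Neumann series inverts inside $\mathcal{B}$. Multiplying by $P(X)^{-1}$ yields $X^{\bullet i_0}=0$, and since $i_0\leq k+1$ we obtain $X^{\bullet(k+1)}=0$. The only delicate point is the reliance on power-associativity to treat $P(X)$ and $P(X)^{-1}$ as well-defined elements of $\mathcal{B}$; once this is in hand, the rest is routine manipulation in a commutative associative algebra.
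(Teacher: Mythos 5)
Your proof is correct and follows essentially the same route as the paper: both hinge on noting that $X\in\rad\mathcal{A}$ is nilpotent, that the $k+1$ powers $X,X^{\bullet 2},\ldots,X^{\bullet(k+1)}$ lie in the $k$-dimensional space $\rad\mathcal{A}$ and are therefore linearly dependent, and then concluding $X^{\bullet(k+1)}=0$. The paper compresses the last step into the observation that the minimum polynomial of $X$ has degree $\leq k+1$ and is a pure power since $X$ is nilpotent, whereas you carry out that computation explicitly by factoring out $X^{\bullet i_0}$ and inverting the remaining unit-plus-nilpotent factor in the commutative associative subalgebra $\mathcal{B}$; this is a valid unpacking of the same argument.
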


\begin{proof}
Let $X\in\rad\mathcal{A}$. As $X,X^{\bullet2},\ldots,X^{\bullet k+1}\in\rad\mathcal{A}$ are linearly dependent,  the minimum polynomial of $X$ has degree\,$\leq k+1$. As $X$ is nilpotent, $X^{\bullet k+1}=0$.
\end{proof}

\begin{proposition}\label{thm:2d_classification}
Every Jordan algebra $\mathcal A$ of dimension two over $\CC$ is isomorphic to 
the Jordan algebra $\CC\{U,X\}$ with unit $U$, where the  product is given by
$
\begin{array}{llllll}
1\colon\!\!\!\!&X^{\bullet2} =X,&\mbox{\!\!when }\mathcal{A}\cong\CC\times\CC; \mbox{ or}\\
2\colon\!\!\!\!&X^{\bullet2} =0,&\mbox{\!\!when }\mathcal{A}/\rad\mathcal{A}\cong\CC.
\end{array}
$
\end{proposition}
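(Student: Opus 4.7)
The plan is to split into cases on $\dim \rad \mathcal{A}$. Since $\mathcal{A}$ is unital with unit $U$, and $U^{\bullet k} = U$ is never nilpotent, $U$ lies outside $\rad \mathcal{A}$; hence $\dim \rad \mathcal{A} \in \{0,1\}$, which already matches the two cases listed in the statement.

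In the first case, $\rad \mathcal{A} = 0$, so Theorem~\ref{thm: strucure Jordan Algebras} writes $\mathcal{A}$ as a direct sum of simple complex Jordan algebras. A quick inspection of the list there shows that $(\CC,\cdot)$ is the only simple piece of dimension below three (spin factors $\CC \times \CC^d$ have dimension $d+1 \geq 3$, and $(\SSS^3,\bullet_{{\bf 1}_3})$ has dimension six), forcing $\mathcal{A} \cong \CC \times \CC$. I would then take $X$ to be the primitive idempotent of the first summand; then $\{U,X\}$ is a basis and $X^{\bullet 2} = X$, giving presentation~$1$.

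In the second case, $\dim \rad \mathcal{A} = 1$, so any nonzero $X \in \rad \mathcal{A}$ will do the job. Lemma~\ref{lem:k+1} with $k=1$ yields $X^{\bullet 2} = 0$ immediately, and since $U \notin \rad \mathcal{A}$ while $X \in \rad \mathcal{A} \setminus \{0\}$, the pair $\{U,X\}$ is linearly independent, hence a basis of the two-dimensional $\mathcal{A}$. The quotient $\mathcal{A}/\rad \mathcal{A}$ is one-dimensional and unital, so isomorphic to $\CC$, giving presentation~$2$.

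The only real subtlety lies in ruling out $\dim \rad \mathcal{A} = 2$; this is the one-line observation that the unit cannot be nilpotent and so cannot live in the radical. Everything else is a direct reading of Theorem~\ref{thm: strucure Jordan Algebras} and Lemma~\ref{lem:k+1}, so I do not expect any substantial obstacle.
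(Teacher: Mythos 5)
Your proof is correct and follows essentially the same route as the paper's: use the structure theorem to reduce to the two possibilities for $\mathcal{A}/\rad\mathcal{A}$, then pick $X$ to be the nontrivial idempotent in the semisimple case or a spanning vector of the radical (with Lemma~\ref{lem:k+1}) otherwise. You add the explicit observation that the unit cannot be nilpotent, hence $\dim\rad\mathcal{A}\leq 1$, which the paper leaves implicit; that is a clean touch but not a different argument.
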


\begin{proof}
By Theorem~\ref{thm: strucure Jordan Algebras}, either
$\mathcal{A}\cong\CC\times\CC$ or $\mathcal{A}/\rad\mathcal{A}\cong\CC$.
We seek $X$ such that $\{U,X\}$ is a basis of $\mathcal{A}$. In the first case, take
the idempotent  $X=(1,0)$  in $\CC\times\CC$. In the second case, we choose $X$ to span $\rad\mathcal{A}$.
 Then $X^{\bullet 2}=0$ by Lemma \ref{lem:k+1}.
\end{proof}

\begin{theorem}\label{thm:3d_classification}
Every Jordan algebra $\mathcal A$ of dimension three over 
$\CC$ is isomorphic to the Jordan algebra $\CC\{U,X,Y\}$ 
with unit $U$, where the product is given~by
$
\begin{array}{llllll}
1\!\!\!\!&{\rm (a)}\colon\!\!\!\!&X^{\bullet2} =X,&\!\!\!\! Y^{\bullet2}=Y& \mbox{\!\!\!\!and } X\bullet Y=0,&\mbox{\!\!when }\mathcal{A}\cong\CC\times\CC\times\CC; \\
&{\rm (b)}\colon\!\!\!\!&X^{\bullet2} =U,&\!\!\!\! Y^{\bullet2}=U& \mbox{\!\!\!\!and } X\bullet Y=0,&\mbox{\!\!when }\mathcal{A}\cong\CC\times\CC^2; \\
2\!\!\!\!&{\rm (a)}\colon\!\!\!\!&X^{\bullet2} =X,&\!\!\!\! Y^{\bullet2}=0& \mbox{\!\!\!\!and } X\bullet Y=0\mbox{ or}&\\
&{\rm (b)}\colon\!\!\!\!&X^{\bullet2} =X,&\!\!\!\! Y^{\bullet2}=0& \mbox{\!\!\!\!and } X\bullet Y=Y/2,&\mbox{\!\!when }\mathcal{A}/\rad\mathcal{A}\cong\CC\times\CC; \mbox{ or}\\
3\!\!\!\!&{\rm (a)}\colon\!\!\!\!&X^{\bullet2} =Y,&\!\!\!\! Y^{\bullet2}=0& \mbox{\!\!\!\!and } X\bullet Y=0\mbox{ or} \\
&{\rm (b)}\colon\!\!\!\!&X^{\bullet2} =0,&\!\!\!\! Y^{\bullet2}=0& \mbox{\!\!\!\!and } X\bullet Y=0,&\mbox{\!\!when }\mathcal{A}/\rad\mathcal{A}\cong\CC.
\end{array}
$
\end{theorem}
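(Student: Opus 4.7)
The plan is to split by the dimension $k:=\dim\rad\mathcal A\in\{0,1,2\}$, which matches the three main cases of the theorem. In every case, Theorem~\ref{thm: strucure Jordan Algebras} pins down $\mathcal A/\rad\mathcal A$, Lemma~\ref{lem:k+1} controls the nilpotent part, and it remains to exhibit generators with the listed product table.

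When $k=0$, $\mathcal A$ is semi-simple of dimension three. Since the simple algebra $(\SSS^3,\bullet_{{\bf 1}_3})$ has dimension six, Theorem~\ref{thm: strucure Jordan Algebras} leaves only $\CC\oplus\CC\oplus\CC$ (case 1a) and $\CC\oplus(\CC\times\CC^{2})$ (case 1b). The generators $X=(1,0,0)$, $Y=(0,1,0)$ and, for the spin factor, $X=(0,e_1)$, $Y=(0,e_2)$ deliver the two tables directly. When $k=2$, we have $\mathcal A=\CC U\oplus\rad\mathcal A$ and every $Z\in\rad\mathcal A$ obeys $Z^{\bullet3}=0$ by Lemma~\ref{lem:k+1}. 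If some $X\in\rad\mathcal A$ satisfies $X^{\bullet2}\ne 0$, I set $Y:=X^{\bullet2}$; power-associativity then gives $Y^{\bullet2}=X^{\bullet4}=0$ and $X\bullet Y=X^{\bullet3}=0$, and $\{U,X,Y\}$ is a basis since $X^{\bullet2}=\lambda X$ would force $0=X^{\bullet3}=\lambda^{2}X$ against $X\ne 0$. This is case 3a. Otherwise every element of $\rad\mathcal A$ squares to zero and polarizing $(X+Y)^{\bullet2}=X^{\bullet2}+2X\bullet Y+Y^{\bullet2}=0$ gives $X\bullet Y=0$, which is case 3b.

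The interesting case is $k=1$. Here $\mathcal A/\rad\mathcal A\cong\CC\times\CC$ by Proposition~\ref{thm:2d_classification}, so I pick a nonzero $Y\in\rad\mathcal A$ (for which $Y^{\bullet2}=0$ by Lemma~\ref{lem:k+1}) and lift a nontrivial idempotent of the quotient to an idempotent $X\in\mathcal A$ with $X^{\bullet2}=X$. Such a lift exists because for any initial preimage $X_0$ one has $X_0^{\bullet2}=X_0+rY$ and $X_0\bullet Y=\alpha Y$ for some scalars $r,\alpha$, and the ansatz $X=X_0+sY$ produces $X^{\bullet2}=X$ as soon as $s(1-2\alpha)=r$; the degenerate case $\alpha=1/2$ forces $r=0$ by a direct application of the Jordan axiom with $x=X_0+Y$ and $y=X_0$. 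Once $X^{\bullet2}=X$ is in hand, write $X\bullet Y=cY$ (possible because $\rad\mathcal A$ is an ideal) and apply the Jordan axiom to $x=X+Y$, $y=X$: expanding each side in the basis $\{U,X,Y\}$ collapses to the scalar identity $2c^{3}-3c^{2}+c=0$, whence $c\in\{0,\tfrac12,1\}$. The case $c=1$ reduces to $c=0$ by swapping $X$ with the orthogonal idempotent $U-X$, leaving the normal forms of cases 2a and 2b.

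The main obstacle is the $k=1$ analysis: both the idempotent lifting and the restriction $c\in\{0,\tfrac12,1\}$ rely on Jordan-axiom computations that are not forced by the general structure theorem and must be carried out by hand. The other cases reduce quickly to linear algebra once Theorem~\ref{thm: strucure Jordan Algebras} and Lemma~\ref{lem:k+1} are available.
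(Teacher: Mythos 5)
Your proof is correct and reaches the same normal forms, but it handles the critical case $\mathcal{A}/\rad\mathcal{A}\cong\CC\times\CC$ by a genuinely different and more self-contained route than the paper. The paper writes $X^{\bullet 2}=X+\lambda Y$, $X\bullet Y=\mu Y$ for an arbitrary preimage $X$ of a nontrivial idempotent and then invokes a computer-algebra computation to constrain $(\lambda,\mu)$, afterwards normalizing away $\lambda$ by the base change $X\mapsto X\pm\lambda Y$ or $X\mapsto U-X$. You instead lift to an \emph{honest} idempotent up front: the ansatz $X=X_0+sY$ works unless $\alpha=1/2$, and in that degenerate case you observe that a single instance of the Jordan axiom forces $r=0$, so $X_0$ is already idempotent. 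With $X^{\bullet2}=X$ in hand, the Jordan axiom applied to $x=X+Y$, $y=X$ collapses to the univariate identity $c(2c-1)(c-1)=0$, and the remaining reduction of $c=1$ to $c=0$ via $X\mapsto U-X$ is the same as the paper's. (I checked both displayed computations; they are correct.) What your approach buys is a hand-checkable argument with no black-box computer-algebra step, at the cost of an extra idempotent-lifting lemma. The other cases ($k=0$ via the structure theorem, $k=2$ via $Z^{\bullet 3}=0$ and the dichotomy on whether some square is nonzero) match the paper essentially verbatim, except for one slip of notation: in the $k=0$ discussion the dimension-three spin factor should be written $\CC\times\CC^2$, not $\CC\oplus(\CC\times\CC^2)$, which would be four-dimensional.
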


\begin{proof}
By Theorem~\ref{thm: strucure Jordan Algebras},  there are four possible cases to consider:
 \vspace{-0.25cm}
\[
\mathcal{A}\,\cong\,\CC\times\CC\times\CC,~ \mathcal{A}\,\cong\,\CC\times\CC^2,
~\mathcal{A}/\rad\mathcal{A}\,\cong\,\CC\times\CC
\,\, \mbox{ or } \,\,\mathcal{A}/\rad\mathcal{A}\,\cong\,\CC.\vspace{-0.25cm}
\]
We seek $X,Y$  such that $\{U,X,Y\}$ is a basis of $\mathcal{A}$.
 In the first case, the choices $X=(1,0,0)$ and $Y=(0,1,0)$ in $\CC\times\CC\times\CC$
 have the desired properties. In the second case, we can choose $X,Y$ to be elements of order two with product zero.
Two such elements in $\CC\times\CC^2$ are $(0,e_1),(0,e_2)$. So, case 1(a) or case 1(b) holds. 

Now suppose $\mathcal{A}/\rad\mathcal{A}\cong\CC\times\CC$.
Let $X \in \mathcal{A}$ be a preimage of $(1,0)$, so
$X^{\bullet2}\equiv X\!\!\mod\rad\mathcal{A}$. Choose $Y$ to span $\rad\mathcal{A}$. Then $Y^{\bullet2}=0$ and $X\bullet Y\in\rad\mathcal{A}$. Here we use that $\rad\mathcal{A}$ is an ideal. So $X^{\bullet2}=X+\lambda Y$ and $X\bullet Y=\mu Y$ for some $\lambda,\mu\in\CC$. 

We determine all $\lambda,\mu \in \CC$ such that the commutative bilinear form $\bullet$ satisfies \vspace{-0.20cm}
$$
Z^{\bullet2}\bullet(Z\bullet W)=Z\bullet(Z^{\bullet2}\bullet W)\vspace{-0.10cm}
$$
 with $Z=c_1U+c_2X+c_3Y$ and $W=d_1U+d_2X+d_3Y$ for all $c_1,c_2,c_3,d_1,d_2,d_3\in\CC$.
 Using computer algebra,
  we find that either $\mu=0$, $\mu=1$ or $(\lambda,\mu)=(0,1/2)$. The first two cases are isomorphic via the base change that replaces $X$ by $U-X$ or $X\pm\lambda Y$. From this we can conclude that case 2(a) holds or case 2(b) holds.
%
%
%
%

If $\mathcal{A}/\rad\mathcal{A}\cong\CC$, we choose $X,Y$ to span $\rad\mathcal{A}$. In this case,  either the square of every element in $\rad\mathcal{A}$ is $0$, or we can choose $X,Y$ such that $X^{\bullet2}=Y$, since $X$ is nilpotent. In the first case, we find that $X^{\bullet2}=Y^{\bullet2}=X\bullet Y=0$. In the second case, we find that $Y^{\bullet2}=X\bullet Y=0$ as $X^{\bullet 3}=0$. So case 3(a) or case 3(b) holds.
\end{proof}

\section{Jordan Pencils}
\label{sec3}

We now embark on the study of the Jordan locus ${\rm Jo}(m,\SSS^n)$
in ${\rm Gr}(m,\SSS^n)$. It is convenient to work in
the sub-Grassmannian
${\rm Gr}_{\bf 1}(m,\SSS^n)$ of subspaces $\mathcal{L}$
that contain the identity matrix ${\bf 1}_n$. The 
{\em restricted Jordan locus} is the intersection
\begin{equation}
\label{eq:restrictedJordan} {\rm Jo}_{\bf 1}(m,\SSS^n) \,\,\, =\,\,\,
{\rm Gr}_{\bf 1}(m,\SSS^n) \, \cap \, {\rm Jo}(m,\SSS^n). 
\end{equation}

We next examine the variety (\ref{eq:restrictedJordan}) for $m=2$.
Pencils in ${\rm Gr}_{\bf 1}(2,\SSS^n)$ have the form
$\mathcal{L} = \CC \{ {\bf 1}_n, X \}$, where ${\rm trace}(X) = 0$.
This identifies  ${\rm Gr}_{\bf 1}(2,\SSS^n)$ with the projective space
$ \PP^{\binom{n+1}{2}-2}$ of traceless symmetric matrices.
With these conventions, 
$\mathcal{L}$ is a Jordan algebra if and only if
the minimal polynomial of $X$ has degree two.

\begin{proposition}
The restricted Jordan locus ${\rm Jo}_{\bf 1}(2,\SSS^n)$ is a variety with
 $\lfloor n/2 \rfloor$ irreducible components $V_i$ for $1 \leq i \leq n/2$.
The component $V_i$ has codimension $\binom{i+1}{2} + \binom{n-i+1}{2} - 2$.
It  is parametrized by diagonalizable matrices
that have two distinct eigenvalues of multiplicities $i$ and $n-i$.
This is case 1 in Proposition \ref{thm:2d_classification}.
\end{proposition}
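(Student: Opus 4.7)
The plan is to realize each putative component $V_i$ as the projective closure of a family parametrized by symmetric rank-$i$ idempotents, and then to verify that these exhaust the restricted Jordan locus. The starting point is that $\Ls = \CC\{\mathbf{1}_n, X\}$ with $\mathrm{tr}(X) = 0$ is a Jordan subalgebra iff $X^2 \in \Ls$, i.e.\ $X^2 = aX + b\mathbf{1}_n$. Since $\dim \Ls = 2$ prevents $X$ from being scalar, its minimal polynomial has degree exactly two, so either $X$ is diagonalizable with two distinct eigenvalues (case~1 of Proposition~\ref{thm:2d_classification}) or $X \neq 0$ satisfies $X^2 = 0$ (case~2).

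For each $1 \leq i \leq \lfloor n/2 \rfloor$, define $V_i^\circ$ as the locus of classes of traceless $X$ whose eigenvalues have multiplicities $(i, n - i)$. Tracelessness $i\lambda + (n-i)\mu = 0$ fixes the eigenvalue ratio, so projectively $[X]$ is determined by the $\lambda$-eigenspace, encoded as a symmetric rank-$i$ idempotent $P$ via $X = \lambda P + \mu(\mathbf{1}_n - P)$. The variety of such $P$ corresponds bijectively (through $P \mapsto \mathrm{image}(P)$) to the Zariski-open subvariety of $\Gr(i, n)$ consisting of $i$-planes on which the standard bilinear form $(u, v) \mapsto u^\top v$ is nondegenerate; this is a single $O(n, \CC)$-orbit, hence irreducible of dimension $i(n - i)$. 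The induced map into $\PP^{\binom{n+1}{2} - 2}$ has only finite fibers (two points when $n = 2i$, where $P$ and $\mathbf{1}_n - P$ coincide projectively), so $V_i^\circ$ is irreducible of dimension $i(n - i)$. Setting $V_i := \overline{V_i^\circ}$, a direct computation verifies that $\binom{n+1}{2} - 2 - i(n - i) = \binom{i+1}{2} + \binom{n-i+1}{2} - 2$.

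Pairwise incomparability of the $V_i$ follows from semicontinuity of rank. A point $X \in V_i^\circ$ has $\rk(X - c\mathbf{1}_n) = n - i$ at its distinguished eigenvalue $c = \lambda$, rank $i$ at $c = \mu$, and full rank for any other $c$. If such $X$ were the limit of a sequence in $V_j^\circ$ with eigenvalues $\lambda_t, \mu_t$, then the limits $\lambda_\infty, \mu_\infty$ must each be an eigenvalue of $X$, and semicontinuity gives $\rk(X - \lambda_\infty) \leq n - j$ and $\rk(X - \mu_\infty) \leq j$. A short case analysis, using $i, j \leq n/2$, forces $i = j$. Hence $V_j \cap V_i^\circ = \emptyset$ whenever $i \neq j$, so $V_i \not\subset V_j$ and the $V_i$ are distinct irreducible components.

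It remains to show that these components cover the Jordan locus. Every case-2 matrix $X$ (symmetric, traceless, $X^2 = 0$, $\rk(X) = r$, with $r \leq n/2$ because $\mathrm{image}(X) \subseteq \ker(X)$ is isotropic) lies in $V_r$. Using an $O(n, \CC)$-normal form that decomposes $X$ as a direct sum of $r$ rank-one symmetric nilpotent $2 \times 2$ blocks plus an $(n - 2r) \times (n - 2r)$ zero block, one tunes the diagonal entries within each nilpotent block and sets the zero block equal to $\mu\,\mathbf{1}_{n-2r}$, so that the perturbed matrix $X_\lambda \in V_r^\circ$ has eigenvalues $(\lambda, \mu)$ with $\mu = -r\lambda/(n - r)$ of multiplicities $(r, n - r)$ and converges to $X$ as $\lambda \to 0$. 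The main technical obstacle is the existence of this complex-orthogonal normal form for symmetric nilpotents, which rests on a Witt-style decomposition of $\CC^n$ adapted to the isotropic subspace $\mathrm{image}(X)$.
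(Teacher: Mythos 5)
Your proof is correct, and it takes a genuinely different route from the paper. The paper reduces everything to the classification of pencils by Segre symbols in the reference \cite{FMS}: it cites \cite[Theorem~3.2]{FMS} to identify which Segre symbols give $\deg(\mathcal{L}^{-1})=1$, \cite[Theorem~5.1]{FMS} for the fact that the nilpotent (Segre symbol $[(2,\dots,2,1,\dots,1)]$) strata lie in the closures of the diagonalizable strata $V_i$, and \cite[Proposition~5.4]{FMS} for the codimension formula. Your argument is self-contained: you realize $V_i^\circ$ directly as a single $O(n,\CC)$-orbit of rank-$i$ symmetric idempotents (equivalently, nondegenerate $i$-planes in $\Gr(i,n)$, via Witt's theorem), compute its dimension $i(n-i)$ from the stabilizer $O(i)\times O(n-i)$, and verify that it matches the stated codimension. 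You then get pairwise incomparability from rank semicontinuity applied to the resolvent $X_t-\lambda_t\mathbf{1}_n$, together with the constraint $i,j\le n/2$, rather than appealing to a pre-tabulated Hasse diagram of Segre strata. Finally, you handle the closure containment of the nilpotent locus by an explicit one-parameter perturbation of a complex-orthogonal normal form (direct sum of $r$ rank-one symmetric nilpotent $2\times2$ blocks and a zero block), which rests on the fact that similar symmetric matrices are orthogonally congruent -- this is exactly \cite[Lemma~1]{bukovsek-omladic}, which the paper states as Lemma~\ref{lemma:conjugation0} and uses elsewhere, so it is available to you. The trade-off is clear: the paper's proof is shorter but outsources the geometry to \cite{FMS}, while yours is longer but elementary and reveals the orbit structure and the perturbation mechanism explicitly. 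One small place you should tighten is the ``short case analysis'' in the incomparability step: when the two limiting eigenvalues coincide, tracelessness forces both to be $0$, so the (projectively normalized) limit would be the zero matrix, which is excluded -- this is the case that actually needs the normalization argument, and it is worth spelling out.
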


\begin{proof}
Following \cite{FMS}, regular pencils are classified by their Segre symbols $\sigma$.
By \cite[Theorem 3.2]{FMS}, a pencil $\mathcal{L}$ has 
${\rm deg}(\mathcal{L}^{-1}) = 1$ if and only if
its Segre symbol is $\sigma = [(1,\ldots, 1), (1 ,\ldots, 1)]\,$ or
$\,\sigma = [(2,\ldots, 2 ,1 ,\ldots, 1)]$. The varieties $V_i$ are the
closures of the former strata.
These contain the latter in their closure, by \cite[Theorem 5.1]{FMS}.
The formula for ${\rm codim}(V_i)$ appears in  \cite[Proposition~5.4]{FMS}.
\end{proof}

\begin{remark}
Nondiagonalizable pencils in ${\rm Jo}_{\bf 1}(2,\SSS^n)$
have only one eigenvalue and Jordan blocks of sizes one or two.
This is case 2 in Proposition \ref{thm:2d_classification}.
\end{remark}

Equations defining  $V_i$ are computed as follows.
Fix a symmetric $n \times n$ matrix of unknowns, $X  = (x_{ij}) $.
Consider the entries of the product $(X-a {\bf 1}_n)(X-b {\bf 1}_n)$,
the $(i+1)$-minors of $X-a {\bf 1}_n$, and the
$(n-i+1)$-minors of $X-b {\bf 1}_n$. By eliminating $a$ and $b$
from these equations, we obtain homogeneous polynomials
in the entries of $X$ that cut out $V_i$ set-theoretically. We
discuss these varieties for $n \leq 5$.
 All pencils of binary quadrics $(n=2)$ are Jordan: ${\rm Jo}_{\bf 1}(2,\SSS^2) = V_1 ={\rm Gr}_{\bf 1}(2,\SSS^2)$.
 
\bigskip

We thus start with Jordan pencils of plane conics ($n=3$).

\begin{example}[$m=2,n=3$]
The variety ${\rm Jo}_{\bf 1}(2,\SSS^3) = V_1$ is irreducible of codimension $2$
and degree $4$. Its ideal is generated by the following seven cubics: \vspace{-0.17cm}
$$ \begin{small} \begin{matrix}
     x_{11} x_{13} x_{22}-x_{11} x_{13} x_{33}-x_{12}^2 x_{13}+x_{12} x_{22} x_{23}
    -x_{12} x_{23} x_{33}+x_{13}^3-x_{13} x_{22}^2+x_{13} x_{22} x_{33}, \\
     x_{11} x_{12} x_{22}-x_{11} x_{12} x_{33}-x_{12}^3+x_{12} x_{13}^2
    -x_{12} x_{22} x_{33}+x_{12} x_{33}^2+x_{13} x_{22} x_{23}-x_{13} x_{23} x_{33}, \\
     x_{11}^2 x_{23}-x_{11} x_{12} x_{13}-x_{11} x_{22} x_{23}-x_{11} x_{23} x_{33}
    +x_{12} x_{13} x_{22}+x_{13}^2 x_{23}+x_{22} x_{23} x_{33}-x_{23}^3, \\
    x_{12}^2 x_{23}-x_{12} x_{13} x_{22}+x_{12} x_{13} x_{33}-x_{13}^2 x_{23}, \quad
x_{11} x_{13} x_{23}-x_{12} x_{13}^2+x_{12} x_{23}^2-x_{13} x_{22} x_{23}, \\ \!\!\!\!
x_{11} x_{12} x_{23}-x_{12}^2 x_{13}-x_{12} x_{23} x_{33}+x_{13} x_{23}^2\,, \quad
 x_{11}^2 x_{22}-x_{11}^2 x_{33}-x_{11} x_{12}^2  +x_{11} x_{13}^2 \quad \\ \qquad -x_{11} x_{22}^2
+x_{11} x_{33}^2+x_{12}^2 x_{22}-x_{13}^2 x_{33}+x_{22}^2 x_{33}
-x_{22} x_{23}^2-x_{22} x_{33}^2+x_{23}^2 x_{33}.
\end{matrix}
\end{small} \vspace{-0.13cm}
$$
These are the expressions in the sum of squares representation of the
discriminant of the characteristic polynomial of $X$, seen in \cite[page 97]{CBMS}.
Indeed, $V_1$ is also the Zariski closure of all \underbar{real}
 matrices in $\SSS^3$ with a double eigenvalue.
 \end{example}

\begin{example}[$m=2,n=4$]
We compute  ${\rm Jo}_{\bf 1}(2,\SSS^4) $ in {\tt Macaulay2} as follows: \vspace{-0.13cm}
\begin{small}
\begin{verbatim}
  R = QQ[a,b,x11,x12,x13,x14,x22,x23,x24,x33,x34,x44];
  X = matrix {{x11,x12,x13,x14},{x12,x22,x23,x24},
              {x13,x23,x33,x34},{x14,x24,x34,x44}};
  I = eliminate({a,b},minors(1,(X-a)*(X-b)))
\end{verbatim}
\end{small} \vspace{-0.13cm}
The ideal ${\tt I}$ is generated by $30$ cubics. It is the intersection
of two prime ideals, corresponding to 
${\rm Jo}_{\bf 1}(2,\SSS^4) = V_1 \cup V_2$.
The component $V_1$ has codimension $5$ and degree $8$. Its
prime ideal is generated by $10$ quadrics, including
$\,x_{13} x_{24}-x_{12} x_{34},\, x_{14} x_{23}-x_{12} x_{34}$ and 
$x_{13} x_{23}-x_{14} x_{24}-x_{12} x_{33}+x_{12} x_{44}$.
The component $V_2$ has codimension $4$ and degree  $6$. Its prime ideal
is generated by $9$ quadrics.
\end{example}

\begin{example}[$m=2,n=5$] The ideal of 
${\rm Jo}_{\bf 1}(2,\SSS^5) = V_1 \cup V_2$ is generated by $81$ cubics.
The variety $V_1$ has codimension $9$ and degree $16$.
Its ideal is generated by $35$ quadrics, including ten 
$2 \times 2$-minors, 
like $x_{12} x_{34}-x_{13} x_{24}$.
 The variety $V_2$ has codimension $7$ and degree $40$.
Its  ideal is generated by $95$ cubics, e.g.~$ \begin{small}
x_{12} x_{13} x_{35}
- x_{13} x_{15} x_{23}
+ x_{22} x_{23} x_{35}
- x_{23}^2 x_{25}
- x_{23} x_{34} x_{45}
- x_{23} x_{35} x_{55}
+ x_{24} x_{34} x_{35}
+ x_{25} x_{35}^2.
\end{small}
$
\end{example}

We now observe that ${\rm Jo}(m,\SSS^n)$ is the orbit of (\ref{eq:restrictedJordan}) under the
congruence action by ${\rm GL}(n)$. This implies the following result in the
Grassmannian ${\rm Gr}(2,\SSS^n)$.

\begin{corollary} \label{cor:n2floor}
The Jordan locus ${\rm Jo}(2,\SSS^n)$ has $\lfloor n/2\rfloor$ irreducible components.
Using the notation from  \cite[Section 5]{FMS}, these components are 
the Grassmann strata $\overline{{\rm Gr}_\sigma}$  that are
associated with the Segre symbols
$\sigma = [(1,\ldots, 1), (1 ,\ldots, 1)]$.
\end{corollary}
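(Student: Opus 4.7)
The plan is to promote the previous proposition on $\mathrm{Jo}_{\mathbf{1}}(2,\SSS^n)$ to $\mathrm{Jo}(2,\SSS^n)$ via the congruence action, and then match the resulting components with the Grassmann strata of \cite[Section 5]{FMS}.

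First, I would verify the assertion preceding the corollary that $\mathrm{Jo}(2,\SSS^n)=\mathrm{GL}(n)\cdot\mathrm{Jo}_{\mathbf{1}}(2,\SSS^n)$, where $\mathrm{GL}(n)$ acts by congruence $g\cdot\Ls=g^{\top}\Ls g$. One inclusion is clear. For the other, take $\Ls\in\mathrm{Jo}(2,\SSS^n)$ and any $U\in\Ls_{\rm inv}$. By Lemma~\ref{lemma: squareroot} applied inside the associative subalgebra generated by $U$ in $(\SSS^n,\bullet_{\mathbf{1}_n})$, or simply because every invertible complex symmetric matrix admits a symmetric square root, there exists $W\in\SSS^n_{\rm inv}$ with $W^2=U$. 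Then $W^{-1}\Ls W^{-1}$ contains $W^{-1}UW^{-1}=\mathbf{1}_n$, and condition (c) of Theorem~\ref{thm:characterization} applied to $\Ls$ transports to the same condition for $W^{-1}\Ls W^{-1}$ with unit $\mathbf{1}_n$, so this transformed pencil lies in $\mathrm{Jo}_{\mathbf{1}}(2,\SSS^n)$.

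Second, combine this with the preceding proposition. Since $\mathrm{GL}(n)$ is irreducible and acts algebraically on $\mathrm{Gr}(2,\SSS^n)$, the image $\mathrm{GL}(n)\cdot V_i$ of the irreducible variety $V_i$ is itself irreducible. Thus $\mathrm{Jo}(2,\SSS^n)$ is the union of the $\lfloor n/2\rfloor$ irreducible subvarieties $\mathrm{GL}(n)\cdot V_i$. To see that no proper containments occur among these, I would use that the Segre symbol of a regular pencil is invariant under congruence: the generic pencil in $V_i$ has Segre symbol $\sigma_i=[(1,\ldots,1),(1,\ldots,1)]$ with blocks of sizes $i$ and $n-i$, so the generic members of distinct $\mathrm{GL}(n)\cdot V_i$ have distinct Segre symbols and neither can be contained in the other.

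Third, I would identify $\mathrm{GL}(n)\cdot V_i$ with the Grassmann stratum closure $\overline{\mathrm{Gr}_{\sigma_i}}$. The stratum $\mathrm{Gr}_{\sigma_i}$ is, by definition in \cite[Section 5]{FMS}, the locally closed set of pencils with Segre symbol $\sigma_i$; by the normal form for such pencils, every member is congruent to one spanned by $\mathbf{1}_n$ and a diagonal matrix with exactly two distinct eigenvalues of multiplicities $i$ and $n-i$, i.e.\ to a member of the parametrization of $V_i$ from the preceding proposition. Taking closures and using the congruence invariance of the Segre symbol again gives $\overline{\mathrm{Gr}_{\sigma_i}}=\mathrm{GL}(n)\cdot V_i$.

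The main obstacle is strictly bookkeeping rather than deep: matching conventions with \cite{FMS} and confirming that the closures $\overline{\mathrm{Gr}_{\sigma_i}}$ are precisely the orbits $\mathrm{GL}(n)\cdot V_i$ and not something larger. This is handled by the generic-Segre-symbol argument together with the explicit normal form, both imported from \cite{FMS}.
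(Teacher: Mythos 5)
Your proposal is correct and follows essentially the same route as the paper: the paper derives the corollary from the preceding proposition on $\mathrm{Jo}_{\mathbf 1}(2,\SSS^n)$ via the one-line observation that $\mathrm{Jo}(m,\SSS^n)$ is the $\GL(n)$-congruence orbit of the restricted locus, and you have supplied the details (reduction to $\mathbf 1_n\in\Ls$ via a symmetric square root of $U$, irreducibility of $\GL(n)\cdot V_i$, and Segre-symbol invariance ruling out proper containments) that the paper leaves implicit. One small simplification you could make: rather than invoking Lemma~\ref{lemma: squareroot}, note directly that any $U\in\SSS^n_{\rm inv}$ equals $P^{\top}P$ for some $P\in\GL(n)$, so the congruence by $P^{-1}$ already brings $\Ls$ into $\mathrm{Gr}_{\mathbf 1}(2,\SSS^n)$.
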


\begin{remark}
Jordan algebras can be viewed as a nonabelian generalization of {\em partition matroids}.
These are the matroids that are direct sums of uniform matroids. Indeed,
the diagonalizable types in Corollary \ref{cor:n2floor} correspond to 
the rank-$2$ partition matroids, and those in 
Theorem \ref{thm:classification m=3,n=4}
to the rank-$3$ partition matroids.
\end{remark}

\section{Jordan Copencils}
\label{sec4}

This section concerns
Jordan algebras of low codimension.
These are quite rare:

\begin{theorem} \label{Thm: codimension}
Let $\Ls\subset\SSS^n$ be a proper Jordan subalgebra. Then $\codim(\Ls)\geq n-1$.
\end{theorem}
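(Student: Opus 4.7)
\emph{Plan.} Writing $U=PP^\top$ (any non-degenerate complex symmetric form is congruent to the standard one, by Sylvester) and applying the congruence $X\mapsto P^{-1}XP^{-\top}$ reduces us to $U=\mathbf{1}_n$; this map is a Jordan isomorphism, compatible in spirit with Proposition~\ref{prop:compU}, and preserves codimension. I then induct on $n$, with the cases $n\leq 2$ immediate since every unital subspace of $\SSS^2$ is Jordan-closed by Cayley--Hamilton.

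For $n\geq 3$, I split by whether $\Ls$ contains a non-trivial idempotent. If \emph{not}, then the spectral projections (Lagrange interpolants in $X$) of every $X\in\Ls$ lie in $\CC[X]\subset\Ls$, so $X$ has a single eigenvalue and decomposes as $\lambda\mathbf{1}_n+N$ with $N$ nilpotent. Hence $\Ls=\CC\mathbf{1}_n\oplus\mathcal N$ with $\mathcal N$ contained in the nilpotent cone of $\SSS^n$. That cone is the closure of the regular nilpotent orbit, of dimension $\binom{n}{2}$, so $\dim\mathcal N\leq\binom{n}{2}$ and $\codim\Ls\geq n-1$.

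Otherwise pick a primitive idempotent $e\in\Ls$ of rank $k\in\{1,\ldots,n-1\}$. Since the image of a symmetric idempotent is non-isotropic, an orthogonal congruence brings $e$ to $\Diag(\mathbf{1}_k,0)$, and Peirce splits $\Ls=\Ls_{11}\oplus\Ls_{12}\oplus\Ls_{22}$ with $\Ls_{ii}$ a unital Jordan subalgebra of $\SSS^{n_i}$ ($n_1=k$, $n_2=n-k$). Primitivity places $\Ls_{11}$ in the previous case, giving $\codim(\Ls_{11},\SSS^k)\geq k-1$. The key observation is that the inclusion $\Ls_{12}^{\bullet 2}\subset\Ls_{11}\oplus\Ls_{22}$ polarizes to $\{B_1B_2^\top+B_2B_1^\top:B_i\in\Ls_{12}\}\subset\Ls_{11}$ and symmetrically for $\Ls_{22}$; if $\Ls_{12}$ were the \emph{full} off-diagonal block these sets would span $\SSS^k$ and $\SSS^{n-k}$, forcing both diagonal blocks full and contradicting either primitivity of $e$ (when $k\geq 2$) or properness of $\Ls$ (when $k=1$, for then $\Ls=\SSS^n$). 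Hence $\codim(\Ls_{12})\geq 1$.

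Now if $\Ls_{22}$ is proper, the inductive hypothesis gives $\codim(\Ls_{22})\geq n-k-1$ and the codimensions sum to $(k-1)+(n-k-1)+1=n-1$. If instead $\Ls_{22}=\SSS^{n-k}$, then $\Ls_{12}$ is a right $M_{n-k}$-module and therefore has the form $V\otimes\CC^{n-k}$ for some $V\subset\CC^k$; the relation $vv^\top\in\Ls_{11}$ together with the nilpotency of $\Ls_{11}\setminus\CC\mathbf{1}_k$ forces $v^\top v=0$, making $V$ totally isotropic with $\dim V\leq\lfloor k/2\rfloor$, and the arithmetic check $(k-1)+(k-\lfloor k/2\rfloor)(n-k)\geq n-1$ closes the sub-case. \emph{The main technical input} I expect to verify is the dimension $\binom{n}{2}$ of the nilpotent cone in $\SSS^n$, which drives Case 1 and, via the Peirce component $\Ls_{11}$, the entire induction; everything else is a careful bookkeeping around the Peirce decomposition together with the ``full off-diagonal forces full diagonal'' principle coming from polarizing the Jordan identity.
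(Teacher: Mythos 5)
Your proof is correct in its essentials, but it takes a genuinely different route. The paper runs a single, non-inductive Peirce analysis with two top-level cases: either $\Ls$ has a primitive idempotent of rank $r>1$, in which case it invokes \cite[Theorem~9]{bukovsek-omladic} to anti-triangularize the $\mathcal{X}_{1,1}$ block (which gives the sharper nilpotent-space bound $\dim\leq\lfloor r^2/4\rfloor$ rather than your $\binom{r}{2}$) and then bounds $\mathcal{X}_{1,2}$ and $\mathcal{X}_{2,2}$ via an isotropic subspace $\mathcal{Y}\subset\CC^{n-r}$; or else the unit decomposes into $n$ rank-one orthogonal idempotents, and a short combinatorial argument on which Peirce blocks $\mathcal{X}_{i,j}$ can be nonzero finishes. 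Your induction on $n$, applied through $\Ls_{22}$, absorbs that second ``all rank-one'' case automatically and makes the bookkeeping more uniform; the cost is that you must also control the ``$\Ls_{22}$ full'' alternative, which you do via the right $M_{n-k}$-module structure of $\Ls_{12}$ and isotropy of $V$ --- a clean observation that the paper does not use in this form.

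Two points to tighten. First, the input you flagged --- that a linear space of nilpotent matrices in $\SSS^n$ has dimension at most $\binom{n}{2}$ --- is true but not elementary; the nilpotent cone in the symmetric pair $(\mathfrak{gl}_n,\mathfrak{o}_n)$ does have dimension $\binom{n+1}{2}-n=\binom{n}{2}$, and the sharper bound $\lfloor n^2/4\rfloor$ (the one the paper effectively uses through anti-triangularization) is \cite{bukovsek-omladic}; either reference should be cited rather than asserted. Second, in the sub-case ``$\Ls_{22}$ full'' your deduction that $V$ is totally isotropic relies on comparing $vv^\top$ to $\lambda\mathbf{1}_k+N$ with $N$ nilpotent, which is vacuous when $k=1$ (there $\Ls_{11}=\CC$ imposes no constraint on $v^\top v$); there the needed conclusion $V=0$ must instead come from properness of $\Ls$, which you invoked in the preceding ``full off-diagonal forces full diagonal'' remark but did not carry into the arithmetic paragraph. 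Making that explicit closes the argument.
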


To prove this theorem, we need the following result
and some  lemmas.\vspace{-0.08cm}

\begin{theorem}[Peirce decomposition, {\cite[Part II, Chapter 8]{McCrimmon2004}}]
\label{thm:peirce}
Let $X_1,\dots X_d$ be orthogonal idempotents in $\mathcal{L}$ with $U=X_1+\dots+X_d$. Then $\Ls=\bigoplus_{1\leq i\leq j\leq d} \mathcal{X}_{i,j}$ where 
 $\,\mathcal X_{i,i}=\{\,Y\in \Ls\,\mid \,Y= X_i\bullet Y\}\,$ and
$\,\mathcal X_{i,j}=\{\,Y\in \Ls\,\mid \, Y = 2 X_i\bullet Y= 2 X_j\bullet Y\}\,$ for $\,i<j$.
\end{theorem}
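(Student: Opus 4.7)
My plan is to reduce the decomposition to a simultaneous eigenspace decomposition of the commuting left-multiplication operators $L_{X_i}\colon\Ls\to\Ls$, $Y\mapsto X_i\bullet Y$, for $i=1,\dots,d$.

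First, for any single idempotent $X\in\Ls$ (meaning $X^{\bullet 2}=X$), I would derive from the Jordan axiom \eqref{eq:jordanaxiom} the operator identity $2L_X^3-3L_X^2+L_X=0$. The standard route is to polarize \eqref{eq:jordanaxiom} fully in the variable $X$ to produce a multilinear operator identity, and then specialize using $X^{\bullet 2}=X$ to collapse the result to a cubic in $L_X$. Since $t(t-1)(2t-1)$ has distinct roots, $L_X$ is diagonalizable over $\CC$ with eigenvalues in $\{0,\tfrac12,1\}$, which already gives the classical single-idempotent Peirce decomposition $\Ls=\Ls_1(X)\oplus\Ls_{1/2}(X)\oplus\Ls_0(X)$.

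Next, for two distinct orthogonal idempotents $X_i,X_j$ (so $X_i\bullet X_j=0$ and each squares to itself), I would establish the commutation $[L_{X_i},L_{X_j}]=0$. This is the crucial technical step: substitute $X\mapsto\alpha X_i+\beta X_j$ into a suitably polarized form of the Jordan identity, expand, and use orthogonality together with idempotence to kill the cross terms that obstruct commutation. Granted this commutativity, the operators $L_{X_1},\dots,L_{X_d}$ are simultaneously diagonalizable, so $\Ls$ decomposes as a direct sum of joint eigenspaces $\Ls_{(\lambda_1,\dots,\lambda_d)}$ indexed by tuples with each $\lambda_k\in\{0,\tfrac12,1\}$.

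Finally, I would identify which tuples actually occur. Since $U=X_1+\dots+X_d$ is the unit of $\Ls$, the operator $L_U=\sum_k L_{X_k}$ is the identity on $\Ls$, so admissible tuples must satisfy $\sum_k\lambda_k=1$. Over $\{0,\tfrac12,1\}$ the only solutions are either one $\lambda_i=1$ with the rest zero (in which case the eigenspace is cut out by $X_i\bullet Y=Y$ alone, giving $\mathcal{X}_{i,i}$) or exactly two coordinates $\lambda_i=\lambda_j=\tfrac12$ with $i<j$ and the rest zero (cut out by $2X_i\bullet Y=2X_j\bullet Y=Y$, giving $\mathcal{X}_{i,j}$). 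This matches the claimed direct-sum decomposition. The main obstacle is the commutativity step: because $\bullet$ is nonassociative, producing $[L_{X_i},L_{X_j}]=0$ cleanly from $X_i\bullet X_j=0$ requires a careful polarization of \eqref{eq:jordanaxiom} and bookkeeping of the resulting multilinear terms, after which the rest of the argument is linear algebra and the elementary enumeration of eigenvalue patterns summing to $1$.
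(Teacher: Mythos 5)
Your outline is correct and is essentially the standard argument: the paper itself does not prove this statement but cites McCrimmon, and the cited proof proceeds exactly via the cubic identity $2L_e^3-3L_e^2+L_e=0$ for an idempotent, commutativity of the multiplication operators of orthogonal idempotents (both obtained by linearizing the Jordan axiom, and here even checkable directly since $\bullet_U$ is special), simultaneous diagonalization, and the constraint $L_U=\mathrm{id}$ restricting the eigenvalue patterns to the cases $\mathcal{X}_{i,i}$ and $\mathcal{X}_{i,j}$.
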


Here, being  {\em orthogonal} means that $X_i \bullet X_j =0$ whenever $i \not= j$. An idempotent $X$ is called 
{\em primitive} if there are no nonzero orthogonal idempotents $X_1$ and $ X_2$ with $X=X_1+X_2$. 
We denote by $J_n$ the $n\times n$ matrix with ones on its anti-diagonal. 

\begin{lemm}[{\cite[Lemma 1]{bukovsek-omladic}}]\label{lemma:conjugation0}
If  $X,Y\in\SSS^n$ are similar, then they are orthogonally congruent.
\end{lemm}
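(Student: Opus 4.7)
The plan is to start from an arbitrary similarity $Y = P^{-1} X P$ with $P$ invertible and correct $P$ by a symmetric factor so that the result becomes (complex) orthogonal. The first step: transpose the identity and use $X^\top = X$, $Y^\top = Y$ to obtain $P^\top X (P^\top)^{-1} = P^{-1} X P$, which rearranges to
\[
(P P^\top)\, X \,=\, X\, (P P^\top).
\]
Thus the symmetric invertible matrix $M := P P^\top$ lies in the commutant of $X$. This is the only algebraic consequence of symmetry we need.

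The next step extracts a \emph{symmetric} square root $S$ of $M$ which \emph{also commutes with} $X$. Since $M$ is invertible, $\mathrm{spec}(M) \subset \CC^{\times}$, and a holomorphic branch of $\sqrt{\,\cdot\,}$ exists on a simply connected neighbourhood of the spectrum. Applying the holomorphic functional calculus to $M$ yields $S := \sqrt{M}$ with $S^2 = M$, and crucially $S$ is a polynomial in $M$. Hence $S^\top = S$ (since $M^\top = M$) and $SX = XS$ (since $MX = XM$).

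Once $S$ is in hand, set $Q := S^{-1} P$. A direct computation shows
\[
Q Q^\top \,=\, S^{-1}(P P^\top) S^{-1} \,=\, S^{-1} M S^{-1} \,=\, {\bf 1}_n,
\]
so $Q$ is orthogonal, and since $S^{-1}$ commutes with $X$,
\[
Q^{-1} X Q \,=\, P^{-1} S X S^{-1} P \,=\, P^{-1} X P \,=\, Y.
\]
Thus $X$ and $Y$ are orthogonally congruent.

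The only nonroutine ingredient is the square-root step: one needs $S$ to be simultaneously a square root of $M$, symmetric, and in the commutant of $X$. This is where an analogous argument over $\RR$ would fail (for $M$ with negative eigenvalues), but over $\CC$ the holomorphic functional calculus delivers $S$ inside the polynomial algebra $\CC[M]$, which automatically enforces both symmetry and the commutation $SX = XS$. Everything else reduces to bookkeeping with transposes and inverses.
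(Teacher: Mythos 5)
Your proof is correct and, as far as one can tell from the paper, takes essentially the same route as the cited source: the paper does not reproduce a proof of this lemma (it simply cites \cite[Lemma 1]{bukovsek-omladic}), but in the proof of Lemma~\ref{lemma:conjugation} it reveals that the cited argument produces the orthogonal matrix $(PP^\top)^{-1/2}P$, which is exactly your $Q = S^{-1}P$ with $S = \sqrt{PP^\top}$. Your observation that the holomorphic functional calculus gives $S$ as a polynomial in $M$, forcing $S^\top = S$ and $SX = XS$, is the right way to make the ``$(PP^\top)^{-1/2}$'' rigorous over $\CC$.
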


\begin{lemm}\label{lemm:antitriagle}
If the unit $U$ of a Jordan subalgebra $\Ls \subset \SSS^n$ is primitive, then $\Ls$ is congruent to $\,\CC J_n\oplus \Ls'$ where $\Ls'$ consists of upper anti-triangular matrices.
\end{lemm}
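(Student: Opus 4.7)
The plan is to combine primitivity with a simultaneous triangularization: primitivity of $U$ reduces the problem to the \emph{local} situation $\Ls=\CC U\oplus\rad\Ls$, a congruence normalizes $U$ to $J_n$, and a further congruence by an element of the complex orthogonal group of $J_n$ puts the nilpotent radical into strictly upper anti-triangular form.

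For the first step I claim that primitivity forces $\Ls/\rad\Ls\cong\CC$. Indeed, every simple Jordan algebra over $\CC$ of dimension $\geq 2$ has a non-primitive unit --- in $(\SSS^k,\bullet)$ one has $\mathbf 1_k=E_{11}+\cdots+E_{kk}$, and in a spin factor $(1,0)=(1/2,v)+(1/2,-v)$ for any vector $v$ with $v^\top v=1/4$ --- and a standard lifting of idempotents through the nil ideal $\rad\Ls$ would then yield a nontrivial orthogonal decomposition of $U$ in $\Ls$, contradicting primitivity. By Theorem~\ref{thm: strucure Jordan Algebras}, $\Ls/\rad\Ls$ is a direct sum of simple Jordan algebras, so each summand must be one-dimensional, and primitivity of $U$ in $\Ls$ forces the sum to consist of a single copy of $\CC$. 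Hence $\Ls=\CC U\oplus\rad\Ls$ and every element of $\rad\Ls$ is Jordan-nilpotent.

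For the second step, since every invertible symmetric matrix is $\CC$-congruent to $J_n$, I may assume $U=U^{-1}=J_n$. The linear map $X\mapsto XJ_n$ is then an isomorphism from $(\Ls,\bullet_U)$ onto a Jordan subalgebra $\mathcal{M}\subset M_n$ (under the standard product $\tfrac12(AB+BA)$) with unit $\mathbf 1_n$ and nilpotent part $\mathcal{N}:=\rad\Ls\cdot J_n$, and every element of $\mathcal{M}$ is $J_n$-self-adjoint. Under a congruence by $A\in O(J_n;\CC)$ on $\Ls$, the matrix $XJ_n$ transforms by similarity $N\mapsto ANA^{-1}$, and $X\in\SSS^n$ is (strictly) upper anti-triangular precisely when $XJ_n$ is (strictly) upper triangular. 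The problem therefore reduces to producing a complete $J_n$-self-dual flag $0\subset V_1\subset\cdots\subset V_n=\CC^n$ (i.e.\ $V_i^{\perp_{J_n}}=V_{n-i}$) satisfying $NV_i\subseteq V_{i-1}$ for every $N\in\mathcal{N}$.

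I build this flag by induction on $n$, the critical step being the construction of a one-dimensional $\mathcal{N}$-invariant $J_n$-isotropic subspace $V_1$. The common kernel $K:=\bigcap_{N\in\mathcal{N}}\ker N$ is nonzero by Engel's theorem applied to the associative envelope of $\mathcal{N}$, which is nilpotent because $\rad\Ls$ is a nilpotent Jordan algebra. For any single nonzero $N\in\mathcal{N}$ of nilpotency index $k$, the self-adjointness of $N$ yields $\mathrm{image}(N^{k-1})\cap\ker N\subseteq\ker N\cap(\ker N)^{\perp_{J_n}}$, so this intersection consists of $J_n$-isotropic vectors. Propagating this observation along the descending central series of $\mathcal{N}$, and using Lemma~\ref{lemma:conjugation0} to translate between similarity and orthogonal congruence when needed, delivers an isotropic $v\in K$. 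With $V_1=\CC v$, the quotient $V_1^{\perp_{J_n}}/V_1$ is an $(n-2)$-dimensional nondegenerate quadratic space carrying the induced nilpotent self-adjoint Jordan algebra, and the inductive hypothesis finishes. The main technical obstacle is precisely the existence of an isotropic vector in $K$: if $K$ were $J_n$-nondegenerate the propagation would fail, but then $\CC^n=K\oplus K^{\perp_{J_n}}$ would be an $\mathcal{N}$-invariant orthogonal decomposition and a Peirce-style argument invoking Theorem~\ref{thm:peirce} would extract a nontrivial orthogonal idempotent in $\Ls$, contradicting primitivity. Primitivity thus enters essentially at exactly this final step.
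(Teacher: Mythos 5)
Your overall plan is genuinely different from the paper's. The paper normalizes $U$ to $\mathbf 1_n$, argues directly with Jordan canonical forms that every element of $\Ls$ has a single eigenvalue (hence $\Ls=\CC\mathbf 1_n\oplus\Ls'$ with $\Ls'$ the traceless/nilpotent part), checks via Lemma~\ref{lemma:conjugation0} that $\Ls'$ is closed under the anticommutator, and then \emph{cites} Theorem~9 of Bukov\v{s}ek--Omladi\v{c} for the anti-triangularization. You instead try to reprove that anti-triangularization from scratch by an Engel/flag argument. Your Step~1 (structure theory plus lifting of idempotents) is a valid alternative to the paper's Jordan-canonical-form argument, though note that Theorem~\ref{thm: strucure Jordan Algebras} as stated only classifies simples of dimension $\leq 8$, so for large $\Ls$ you would need to invoke the full Jordan--von Neumann--Wigner list.

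The serious gap is in the last paragraph. First, ``propagating this observation along the descending central series'' is not an argument; you have shown $\mathrm{image}(N^{k-1})$ is isotropic for a single $N$, but that does not by itself produce an isotropic vector in the \emph{common} kernel $K$. Second, and more importantly, your handling of the case $K$ nondegenerate is wrong. Theorem~\ref{thm:peirce} \emph{presupposes} orthogonal idempotents lying in $\Ls$; it does not manufacture them. The $J_n$-orthogonal projection onto $K$ is $J_n$-self-adjoint and commutes with $\mathcal N$, but there is no reason it should lie in $\mathcal M=\Ls J_n$, so no idempotent in $\Ls$ and no contradiction with primitivity is obtained. In fact $K$ can perfectly well be nondegenerate when $U$ is primitive---take $\Ls=\CC U$, where $\mathcal N=0$ and $K=\CC^n$. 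The correct repair is not via primitivity at all: if $\dim K\geq 2$, a complex quadratic space of that dimension always contains an isotropic vector; if $\dim K=1$ and $K$ is nondegenerate, then for $W=K^{\perp_{J_n}}$ one has $\bigcap_{N\in\mathcal N}\ker(N|_W)\subseteq K\cap W=0$, so Engel forces $\mathcal N|_W=0$, hence $\mathcal N=0$ and the statement is trivial. Consequently primitivity enters only in your Step~1, not ``essentially at exactly this final step'' as you assert; that misreads the role of the hypothesis. With these two spots repaired your route would go through, but the paper's citation of the anti-triangularization theorem is substantially shorter.
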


\begin{proof} After congruence, we may assume $U = {\bf 1}_n$.
Let $X\in\Ls$.  By considering the Jordan canonical form of $X$, adding multiples of ${\bf 1}_n$ to $X$, and taking powers, we see that ${\bf 1}_n$ cannot be primitive if $X$ has distinct eigenvalues. So $\Ls=\CC {\bf 1}_n\oplus \Ls'$ where $\Ls'\subset\Ls$ is the subset of nilpotent matrices. By Lemma~\ref{lemma:conjugation0}, we see that $XY+YX\in\Ls'$ for  $X,Y\in\Ls'$. So $\Ls'$ is anti-triangularizable by \cite[Theorem 9]{bukovsek-omladic}. The transformation given there takes the unit matrix ${\bf 1}_n$ to the matrix $J_n$.
\end{proof}

Two spaces $\Ls,\Ls'\subset\SSS^n$ are congruent if $\Ls'=P\Ls P^\top$ for some $P\in\GL(n)$. 
We say that $\Ls,\Ls'$ are {\em orthogonally congruent} if $P$ lies in
the orthogonal group $O(n)$.\vspace{-0.08cm}

\begin{lemm}\label{lemma:conjugation}
If  $\Ls,\Ls'\subset\SSS^n$ are similar, then they are orthogonally congruent.
\end{lemm}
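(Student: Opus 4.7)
My plan is to reduce the subspace statement to a single-matrix argument by producing an orthogonal correction to any given similarity. Suppose $\Ls' = P \Ls P^{-1}$ for some $P \in \GL(n)$. First I would observe that for every $X \in \Ls$ the conjugate $PXP^{-1}$ lies in $\Ls' \subset \SSS^n$, hence equals its own transpose $P^{-\top} X P^{\top}$. Rearranging gives $(P^\top P)\, X = X\, (P^\top P)$, so the symmetric invertible matrix $A := P^\top P$ commutes with every element of $\Ls$.

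The next step is to modify $P$ on the right by a factor that preserves $\Ls$ under conjugation and turns the composition into an orthogonal matrix. The natural candidate is $R := A^{-1/2}$, and the key is to choose this square root so that it is simultaneously symmetric and a commutant of $\Ls$. Over $\CC$, every invertible matrix admits a square root expressible as a polynomial in itself, obtained via Hermite interpolation on its (nonzero) spectrum. Taking $A^{1/2} = p(A)$ for such a polynomial $p$ makes the square root automatically symmetric (as a polynomial in the symmetric matrix $A$) and automatically commuting with $\Ls$ (as anything that commutes with $A$ commutes with $p(A)$). Setting $Q := P A^{-1/2}$, one then checks
\[
Q^\top Q \,=\, A^{-1/2} A \, A^{-1/2} \,=\, {\bf 1}_n, \qquad
Q \Ls Q^\top \,=\, P A^{-1/2} \Ls A^{1/2} P^{-1} \,=\, P \Ls P^{-1} \,=\, \Ls',
\]
where the last line uses that $A^{\pm 1/2}$ commute with every element of $\Ls$. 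This exhibits the desired orthogonal congruence.

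The main point requiring care is the polynomial square root. Since $A = P^\top P$ is invertible and we work over $\CC$, the spectrum of $A$ avoids $0$, so a polynomial square root exists regardless of any positive-definiteness obstruction; the remainder of the argument is purely formal linear algebra. Notice that this plan does not use the Jordan algebra structure of $\Ls$ at all, only that $\Ls$ consists of symmetric matrices, so the statement could be formulated and proved for arbitrary linear subspaces of $\SSS^n$. In that sense the result is a natural subspace upgrade of Lemma~\ref{lemma:conjugation0}, and indeed one recovers that lemma by specializing to $\Ls = \CC X$ and $\Ls' = \CC Y$.
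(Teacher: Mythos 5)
Your proof is correct and takes essentially the same approach as the paper: both replace $P$ by the orthogonal factor of its polar decomposition, using the square root of $P^\top P$ (you) or $PP^\top$ (the paper, via the cited proof of Lemma~\ref{lemma:conjugation0}) to absorb the symmetric part. Your version is more self-contained, spelling out the key commutation $(P^\top P)X = X(P^\top P)$ and the polynomial square root rather than deferring to the external reference.
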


\begin{proof}
We have $\Ls P=P\Ls'$ for some  $P\in\GL(n)$. 
Consider  $A\in\Ls$ and $B\in\Ls'$ with $AP=PB$. By the proof of \cite[Lemma 1]{bukovsek-omladic}, we get $AU=UB$ for $U=(PP^\top)^{-1/2}P$.
The matrix $U$ is orthogonal, and it maps $\mathcal{L}$ into $\mathcal{L'}$
under congruence.
\end{proof}

\begin{lemm}\label{lemma:idempotent}
Let $X\in\SSS^n$ be an idempotent matrix of rank $r$. Then $X$ is orthogonally congruent to $\Diag({\bf 1}_r,{\bf 0}_{n-r})$. If $X=\Diag({\bf 1}_r,{\bf 0}_{n-r})$ and $Y\in\SSS^n$ is a matrix with $XY+YX=0$, then $Y=\Diag({\bf 0}_r,Z)$ for some $Z\in\SSS^{n-r}$.
\end{lemm}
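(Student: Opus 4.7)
The lemma has two independent parts, both of which should go through quickly once we invoke the previously stated results.

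For the first claim, I would argue as follows. Any idempotent matrix $X$ (symmetric or not) with $X^2=X$ of rank $r$ has minimal polynomial dividing $t^2-t$, so its only eigenvalues are $0$ and $1$, with multiplicities $n-r$ and $r$ respectively; hence $X$ is similar as a matrix to $\Diag({\bf 1}_r,{\bf 0}_{n-r})$. Both $X$ and $\Diag({\bf 1}_r,{\bf 0}_{n-r})$ lie in $\SSS^n$, so Lemma~\ref{lemma:conjugation0} upgrades this similarity to an orthogonal congruence. This is the whole argument for the first part; no further work is required.

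For the second claim, with $X=\Diag({\bf 1}_r,{\bf 0}_{n-r})$ fixed, I would write $Y$ in block form conforming to the splitting $\CC^n=\CC^r\oplus\CC^{n-r}$, namely
\[
Y \,=\, \begin{pmatrix} A & B \\ B^\top & C \end{pmatrix},
\]
with $A=A^\top\in\SSS^r$, $C=C^\top\in\SSS^{n-r}$ and $B\in\CC^{r\times(n-r)}$. A direct multiplication gives
\[
XY \,=\, \begin{pmatrix} A & B \\ 0 & 0 \end{pmatrix}, \qquad YX \,=\, \begin{pmatrix} A & 0 \\ B^\top & 0 \end{pmatrix},
\]
so the condition $XY+YX=0$ reads $2A=0$ and $B=0$. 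Therefore $Y=\Diag({\bf 0}_r,C)$, and setting $Z:=C\in\SSS^{n-r}$ finishes the proof.

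I do not expect any real obstacle: the first part reduces instantly to Lemma~\ref{lemma:conjugation0} (the key input that similar symmetric matrices over $\CC$ are orthogonally congruent), and the second part is a one-line block computation. The only minor point to be careful about is using the correct definition of ``orthogonal'' over $\CC$ (i.e.\ $PP^\top={\bf 1}_n$, not $PP^*={\bf 1}_n$), which is what \cite{bukovsek-omladic} provides and what the congruence action $\Ls\mapsto P\Ls P^\top$ requires.
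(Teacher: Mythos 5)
Your proof is correct and follows exactly the route the paper intends: the first claim is the diagonalizability of an idempotent (minimal polynomial dividing $t^2-t$) combined with Lemma~\ref{lemma:conjugation0} to upgrade similarity to orthogonal congruence, and the second claim is the straightforward block computation. The paper's one-line proof (``This follows from Lemma~\ref{lemma:conjugation0}'') is just a compressed version of what you wrote out.
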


\begin{proof} This follows from   Lemma~\ref{lemma:conjugation0}.
\end{proof}

\begin{proof}[Proof of Theorem \ref{Thm: codimension}]
Either $\Ls$ contains a primitive idempotent of rank $>1$, or
the unit $U$ of $\Ls$ is a sum of orthogonal rank-$1$ idempotents. We shall examine the associated Peirce decompositions 
(Theorem \ref{thm:peirce}). These have $d=2$ and $d=n$.

Suppose that $\Ls$ contains a primitive idempotent $X_1$ of rank $r>1$ and take $X_2=U-X_1$. Then $\Ls=\mathcal{X}_{1,1}\oplus\mathcal{X}_{1,2}\oplus\mathcal{X}_{2,2}$. By Lemmas \ref{lemm:antitriagle} and \ref{lemma:idempotent}, we assume that $X_1=\Diag(J_r,{\bf 0}_{n-r})$, $X_2=\Diag({\bf 0}_r,{\bf 1}_{n-r})$ and $\mathcal{X}_{1,1}$ consists of upper anti-triangular $r \times r$ matrices. So $\mathcal{X}_{1,1}$ has codimension $\geq r-1$ in $\SSS^r$. Elements in $\mathcal X_{1,2}$ are of the form $Z={\footnotesize
 \begin{pmatrix}
 {\bf 0}_r & V^\top\\
 V &  {\bf 0}_{n-r} 
 \end{pmatrix}
}$ with $Z^{\bullet 2}=\Diag(V^\top V,VJ_rV^\top)$. It follows that $V^\top V\in\mathcal{X}_{1,1}$.
So, the last column $v$ of $V$ satisfies $v^\top v=0$. Hence $v$ is contained in a subspace $\mathcal Y\subset\CC^{n-r}$ of codimension $c\geq (n-r)/2$. We have $\mathcal{X}_{2,2}=\Diag({\bf 0}_r,\mathcal{Z})$ where $\mathcal{Z}\subset\SSS^{n-r}$ consists of matrices that map $\mathcal Y$ into itself. Therefore $\mathcal{Z}$ has codimension $\geq c(n-r-c)$. So $\Ls$ has codimension $\geq r-1+c+c(n-r-c)\geq n-1$.

If all primitive idempotents of $\Ls$ have rank $1$, then $U=X_1+\dots+X_n$ for rank-$1$ orthogonal idempotents $X_1,\ldots,X_n$. By Lemma~\ref{lemma:idempotent}, we may assume $X_i=E_{ii}$. We 
note that the spaces of the Peirce decomposition are either $\mathcal{X}_{i,j} =\CC\{E_{i,j}+E_{j,i}\}$ or $\mathcal{X}_{i,j}=\{0\}$.  Since $\Ls$ is a proper subalgebra, at least one of these spaces is $\{0\}$, say $\mathcal X_{1,2}=\{0\}$. Since $2(E_{i,j}+E_{j,i})\bullet (E_{i,k}+E_{k,i})=E_{k,j}+E_{j,k}$, this implies either $\mathcal X_{1,i}=\{0\}$ or $\mathcal X_{2,i}=\{0\}$ for each $i \in \{3,\dots, n\}$. Therefore $\codim (\Ls)\geq n-1$.
\end{proof}

A linear subspace $\mathcal{L}\subset\SSS^n$ is said to be a 
 {\em copencil} if $\codim (\Ls) = 2$, i.e.~the orthogonal complement $ \mathcal{L}^\perp =  \{\,
X \in \SSS^n \mid {\rm trace}(XZ) = 0 \,\,\hbox{for} \,\, Z \in \mathcal{L} \} $ is a pencil.
We say that $\mathcal{L}$ is a {\em Jordan copencil} if $\mathcal{L}$ is a copencil that satisfies the equivalent
conditions (a), (b) and (c) from Theorem \ref{thm:characterization}.
Theorem \ref{Thm: codimension} implies:

\begin{corollary} There are no Jordan copencils unless $n \leq 3$.
Every copencil with $n=2$ is a Jordan copencil since it is the span of
one invertible matrix $U $ in $\SSS^2$.
\end{corollary}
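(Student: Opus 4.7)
The plan is to read both statements off directly from Theorem~\ref{Thm: codimension}, the codimension lower bound for proper Jordan subalgebras.

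For the nonexistence part, I would start with a Jordan copencil $\mathcal{L} \subset \SSS^n$. By definition $\codim(\mathcal{L}) = 2$, and by condition (b) of Theorem~\ref{thm:characterization} there is some $U \in \mathcal{L}_{\rm inv}$ for which $\mathcal{L}$ is a Jordan subalgebra of $(\SSS^n, \bullet_U)$. Since $\codim(\mathcal{L}) = 2 \neq 0$, this subalgebra is proper, so Theorem~\ref{Thm: codimension} applies and yields $n - 1 \leq 2$, i.e.~$n \leq 3$.

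For the $n = 2$ claim, note that $\dim \SSS^2 = 3$, so any copencil is one-dimensional. The regularity built into being a Jordan copencil (namely $\mathcal{L}_{\rm inv} \neq \emptyset$) forces $\mathcal{L}$ to contain some invertible $U \in \SSS^2$, whence $\mathcal{L} = \CC U$. I would then verify closure under $\bullet_U$ by the one-line computation
\[
(aU) \bullet_U (bU) \,\,=\,\, \tfrac{1}{2}\bigl(aU\,U^{-1}\,bU + bU\,U^{-1}\,aU\bigr) \,\,=\,\, ab\,U \,\,\in\,\, \mathcal{L}
\]
for all $a,b \in \CC$, which is exactly condition (b) of Theorem~\ref{thm:characterization}.

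There is no substantive obstacle: all the real work has been packaged into Theorem~\ref{Thm: codimension}, and what remains is a codimension count plus a trivial check that any one-dimensional regular subspace of $\SSS^2$, viewed as an algebra with unit its own generator, is automatically Jordan.
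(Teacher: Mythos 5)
Your argument is correct and matches the paper's, which simply states that the corollary follows from Theorem~\ref{Thm: codimension}: a Jordan copencil is a proper Jordan subalgebra of codimension $2$, so $2 \geq n-1$ gives $n \leq 3$, and the $n=2$ verification is the computation you wrote. One small point of logic to tidy up: for the second claim you cannot derive regularity from ``being a Jordan copencil,'' since that is exactly what you are trying to establish --- doing so would make the claim vacuous. Instead, regularity of $\Ls$ should come from the paper's standing convention (stated in the introduction) that only regular subspaces $\Ls$ are considered; equivalently, the corollary is implicitly about regular copencils, and indeed a non-regular copencil such as $\CC E_{11} \subset \SSS^2$ is not a Jordan copencil. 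With that attribution fixed, your one-line check that $\CC U$ is closed under $\bullet_U$ completes the proof.
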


It remains to study $n=3$. We represent  Jordan  copencils  by the variety
\begin{equation}
\label{eq:copencils3}
 \left\{\, (X,Y) \in \SSS^3 \times \SSS^3 \,\,\middle|\,\,
(\CC \{X,Y\} )^\perp \,\,\hbox{is a point in} \,\, \,{\rm Jo}(4,\SSS^3) \, \right\} .
\end{equation}

\vspace{5pt}

\begin{proposition}\label{thm:classification m=4,n=3}
There are two congruence orbits of Jordan copencils in $\SSS^3$:\vspace{-0.15cm}
$$ \mathcal{L}_1 \,\,=\,\, \begin{small} \begin{pmatrix}
\, x & 0 & 0 \, \\ \,0 &y&w \, \\ \, 0 &w&z\,
\end{pmatrix} \end{small} \quad \mbox{ and } \quad
\mathcal{L}_2\, \,= \,\,
\begin{small} \begin{pmatrix}
\, x&y&w \, \\ \, y&z & 0 \,\\ \, w & 0 & 0 \,
\end{pmatrix}. \end{small}\vspace{-0.15cm}
$$
The orbit of $\mathcal{L}_2$ is in the closure of
the orbit of  $\mathcal{L}_1$. These orbits have codimensions $5$ and $4$, so the Jordan 
locus ${\rm Jo}(4, \SSS^3)$ is irreducible of codimension~$4$. 
The prime ideal of (\ref{eq:copencils3}) has degree $21$ and is
 generated by $4$ cubics and $15$ quartics.
\end{proposition}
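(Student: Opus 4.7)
The plan is to (i) classify all $4$-dimensional Jordan subalgebras of $\SSS^3$ up to congruence, (ii) compute the two orbit codimensions and exhibit an explicit degeneration from $\Ls_1$ to $\Ls_2$, and (iii) verify the ideal of~(\ref{eq:copencils3}) by a symbolic computation.

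For step~(i), I would work with any unit $U\in\Ls_{\rm inv}$, resolve it into primitive orthogonal idempotents, and follow the extremal analysis in the proof of Theorem~\ref{Thm: codimension} (since $\codim(\Ls)=n-1=2$ saturates that bound). Case~A: $U=X_1+X_2$ with $X_1$ primitive of rank~$2$; after orthogonal congruence one takes $X_1=\Diag(J_2,0)$ and $X_2=E_{33}$. Lemma~\ref{lemm:antitriagle}, interpreted in the Jordan sense after changing the inner block-unit $\mathbf{1}_2$ to $J_2$, bounds $\dim\mathcal X_{1,1}\leq 2$; the isotropy condition $v^\top v=0$ from the proof of Theorem~\ref{Thm: codimension} bounds $\dim\mathcal X_{1,2}\leq 1$; and $\dim\mathcal X_{2,2}=1$. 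To reach dimension~$4$ each bound must be saturated, and the resulting normal form is permutation-congruent to~$\Ls_2$. Case~B: $U=X_1+X_2+X_3$ with three rank-$1$ orthogonal idempotents. By Lemma~\ref{lemma:idempotent} one may take $X_i=E_{ii}$; then $\mathcal X_{ii}=\CC E_{ii}$ and each $\mathcal X_{ij}\subseteq\CC(E_{ij}+E_{ji})$. Dimension~$4$ forces exactly one off-diagonal piece to be nonzero, giving a subalgebra permutation-congruent to~$\Ls_1$.

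For step~(ii), the stabilizer of $\Ls_1$ under $P\mapsto P\Ls_1 P^\top$ is the block-diagonal group $\Diag(\CC^*,\GL(2))$ of dimension~$5$ (a short computation shows the off-block entries of $P$ must vanish), so the orbit has codimension~$4$ in $\Gr(4,\SSS^3)$. Via the trace pairing, the stabilizer of $\Ls_2$ coincides with that of the pencil $\Ls_2^\perp=\CC\{E_{23}+E_{32},E_{33}\}$; a kernel/image analysis identifies this with the Borel subgroup of upper-triangular matrices (dimension~$6$), giving orbit codimension~$5$. The closure relation is witnessed by the one-parameter family $P_t=\left(\begin{smallmatrix}1&0&1\\0&1&0\\0&0&t\end{smallmatrix}\right)$: spanning $P_t\Ls_1 P_t^\top$ by four explicit generators and rescaling the two vanishing at $t=0$ gives $\lim_{t\to 0}P_t\Ls_1 P_t^\top=\Ls_2$ in the Grassmannian. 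Consequently $\Jo(4,\SSS^3)=\overline{\GL(3)\cdot\Ls_1}$ is irreducible of codimension~$4$.

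For step~(iii), a {\tt Macaulay2} script implements the set-theoretic description from just before Section~\ref{sec2}: given generic $X,Y\in\SSS^3$ (with $12$ unknowns), pick a rational basis $(Z_1,\ldots,Z_4)$ of $\spa\{X,Y\}^\perp$, write a generic $U=\sum_k\alpha_k Z_k$ in that span, and impose the linear dependence of $Z_1,\ldots,Z_4,Z_i\adj(U)Z_j+Z_j\adj(U)Z_i$ in $\SSS^3$; eliminating the $\alpha_k$ and saturating by the appropriate non-vanishing condition produces the prime ideal, whose invariants ($4$ cubic and $15$ quartic generators, degree~$21$) can then be read off.

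The main obstacle is the tight bookkeeping in Case~A of step~(i): Lemma~\ref{lemm:antitriagle} controls $\mathcal X_{1,1}$ only after an additional change of basis, and one must combine it carefully with the Peirce isotropy constraint $V^\top V\in\mathcal X_{1,1}$ to uniquely isolate $\Ls_2$. Once the two normal forms $\Ls_1$ and $\Ls_2$ are in place, the stabilizer calculations, the explicit degeneration, and the {\tt Macaulay2} computation are all routine.
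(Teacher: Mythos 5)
Your proposal is correct but takes a genuinely different route from the paper on the classification and orbit-containment steps.

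For the classification, the paper goes through the abstract side: it invokes Theorem~\ref{thm: strucure Jordan Algebras} to list the possible four-dimensional Jordan algebras, rules out $\CC^4$, $\CC\times\CC^2$ and the zero-product algebra from embedding in $\SSS^3$, and then finds by computer the embeddings of the remaining abstract types, reducing them to $\Ls_1$ or $\Ls_2$ via Lemma~\ref{lemma:idempotent}. You instead work entirely on the embedded side, exploiting that $\codim(\Ls)=2=n-1$ saturates the bound of Theorem~\ref{Thm: codimension}: the Peirce decomposition of the unit (two idempotents of ranks $2$ and $1$, or three of rank $1$) together with Lemmas~\ref{lemm:antitriagle} and~\ref{lemma:idempotent} forces every Peirce inequality in that proof to become an equality, which rigidly pins down the Peirce components and yields $\Ls_2$ or $\Ls_1$ directly. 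This avoids the detour through abstract isomorphism types and is more self-contained, at the cost of the delicate bookkeeping you flag (keeping track of the block unit changing from $\mathbf 1_2$ to $J_2$ and propagating the constraint $V^\top V\in\mathcal X_{1,1}$). You should also record, even if briefly, that a primitive unit of rank $3$ is impossible, since Lemma~\ref{lemm:antitriagle} then caps $\dim\Ls$ at $3$; this closes the case split.

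For the orbit dimensions and the containment, the paper relies on computer algebra (and deduces the containment indirectly from irreducibility of the ideal of~(\ref{eq:copencils3}) plus the check that $\Ls_2$ satisfies the $19$ generators). Your stabilizer computations (dimensions $5$ and $6$ inside $\GL(3)$, hence orbit codimensions $4$ and $5$ in the $8$-dimensional $\Gr(4,\SSS^3)$) and the explicit family $P_t$ with a Grassmannian limit equal to $\Ls_2$ are more conceptual and verifiable by hand; I checked that $P_t\Ls_1P_t^\top$ does converge to $\Ls_2$ after rescaling the one generator $P_tE_{33}P_t^\top-E_{11}$ by $t^{-1}$ (only one generator degenerates, not two as you wrote, but this is a harmless slip). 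For the ideal of~(\ref{eq:copencils3}) both you and the paper defer to a symbolic computation; the paper additionally notes the pleasant observation that the four cubics are the coefficients of $\det(xX+yY)$, which your elimination-based script would recover but not explain.
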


\begin{proof}
Theorem~\ref{thm: strucure Jordan Algebras} gives a finite list of cases to examine.
By multiple applications of Lemma~\ref{lemma:idempotent}, we see that $\CC\times\CC\times\CC\times\CC$ cannot be embedded into~$\SSS^3$. By Theorem~\ref{thm:classification m=3,n=n}, $\CC\times\CC^2$ cannot be embedded into $\SSS^3$.
And, a computation shows that the Jordan algebra $\CC\{X,Y\}$ with zero bilinear form $\bullet$ cannot be embedded into $\SSS^3$. Using the help of a computer as in the proof of Theorem~\ref{thm:3d_classification}, we generate a finite list of 
Jordan algebras that do not contain any of these Jordan algebras as a subalgebra. Using Lemma~\ref{lemma:idempotent}, we find that all embeddings of these Jordan algebras are 
congruent  to $\mathcal{L}_1$ or $\mathcal{L}_2$.

The last two statements are proved by computer algebra. 
Note that 
$\mathcal{L}_1^\perp $ and $\mathcal{L}_2^\perp$ are singular pencils.
 The four cubics in the ideal for
$\mathcal{L}_1$ are the four coefficients of  ${\rm det}(x X + y Y)$. The
$15$ quartics can also be derived using linear algebra. For the containment of orbits, we
check that $\mathcal{L}_2$ satisfies these $19$ equations.
\end{proof}

\section{Three-Dimensional Jordan Algebras}
\label{sec5}

The conditions in Theorem~\ref{thm:characterization} are invariant under congruence.
Ideally, one would classify Jordan subalgebras of $\SSS^n$ up to congruence and 
describe the poset of orbit closures.
We tackle this problem for $m=3$.
The following two theorems are our main results in Section \ref{sec5}.
The first gives a complete solution for $4 \times 4$ matrices.
The second gives a solution for Jordan subalgebras without a radical.

\begin{theorem}\label{thm:classification m=3,n=4}
Every Jordan net in $\SSS^4$ is congruent to one of the following nets:

\noindent$
\begin{array}{llllll}
1\!\!\!\!&{\rm (a)}\!\!\!\!\!\!\!\!\!\!&\colon\!\!\!\!&\Diag(x{\bf 1}_2,y,z);\\
&{\rm (b)}\!\!\!\!\!\!\!\!\!\!&\colon\!\!\!\!&\mbox{the Kronecker product of  ${\bf 1}_2$
with $\SSS^2$;
\qquad (see $\mathcal{L}_2$ in Example   \ref{ex:drei})}
\\
2\!\!\!\!&{\rm (a1)}\!\!\!\!\!\!\!\!\!\!&\colon\!\!\!\!&\Diag(xJ_2+yE_{11},z{\bf 1}_2);\\
&{\rm (a2)}\!\!\!\!\!\!\!\!\!\!&\colon\!\!\!\!&\Diag(xJ_3+yE_{11},z);\\
&{\rm (b)}\!\!\!\!\!\!\!\!\!\!&\colon\!\!\!\!&\Diag(xJ_2,yJ_2)+z(E_{13}+E_{31});\\
3\!\!\!\!&{\rm (a)}\!\!\!\!\!\!\!\!\!\!&\colon\!\!\!\!&x\Diag(J_3,1)+y(E_{12}+E_{21})+zE_{11};\\
&{\rm (b1)}\!\!\!\!\!\!\!\!\!\!&\colon\!\!\!\!&xJ_4+yE_{11}+zE_{22};\mbox{ or}\\
&{\rm (b2)}\!\!\!\!\!\!\!\!\!\!&\colon\!\!\!\!&xJ_4+yE_{11}+z(E_{13}+E_{31}).
\end{array}
$

\smallskip

\noindent The numberings of the congruence classes are the same as in
   Theorem \ref{thm:3d_classification}.
Their orbit closures are subvarieties in the Grassmannian ${\rm Gr}(3,\SSS^4)$ of dimension~$21$.

\begin{samepage}

The codimensions of these Jordan loci 
and their containments are as follows:

\vspace*{-0.07cm}

\begin{center}
\begin{tikzpicture}

	\node () at (-3,2.4) {$\codim$};
\node () at (-3,1.6) {$\codim$};
\node () at (-3,.8) {$\codim$};
\node () at (-3,0) {$\codim$};	
	\node () at (-2,2.4) {$9$};
	\node () at (-2,1.6) {$10$};
	\node () at (-2,.8) {$11$};
	\node () at (-2,0) {$12$};
	
    \node (1a) at (1,2.4) {1(a)};
    \node (1b) at (4,1.6) {1(b)};
    \node (2a1) at (0,1.6) {2(a1)};
    \node (2a2) at (2,1.6) {2(a2)};
    \node (2b) at (5.,.8) {2(b)};
    \node (3a) at (1.,.8) {3(a)};
    \node (3b1) at (3.,.8) {3(b1)};
    \node (3b2) at (3.,0) {3(b2)};
    
    \draw [thick] (1a) -- (2a1);
    \draw [thick] (1a) -- (2a2);
    \draw [thick] (1b) -- (2b);
    \draw [thick] (1b) -- (3b1);
    \draw [thick] (2a1) -- (3a);
    \draw [thick] (2a2) -- (3b1);
    \draw [thick] (2b) -- (3b2);
    \draw [thick] (2a2) -- (3a);
    \draw [thick] (3a) -- (3b2);
    \draw [thick] (3b1) -- (3b2);
\end{tikzpicture}
\end{center}
\end{samepage}
\end{theorem}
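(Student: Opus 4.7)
The plan is to enumerate the congruence classes of Jordan nets $\Ls \subset \SSS^4$ by starting from the abstract classification in Theorem~\ref{thm:3d_classification}. For each of the six abstract isomorphism types we find a complete list of embeddings into $\SSS^4$ up to congruence, using the Peirce decomposition and the normalization lemmas from Section~\ref{sec4}.

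\textbf{Step 1 (normalize the unit).} Given $\Ls \subset \SSS^4$ with unit $U$, we first bring $U$ to standard form by congruence. When $U$ is primitive in $\Ls$, which occurs precisely when $\mathcal{A}/\rad\mathcal{A}\cong\CC$ (types $3(\text{a})$, $3(\text{b1})$, $3(\text{b2})$), Lemma~\ref{lemm:antitriagle} gives $\Ls=\CC J_4\oplus\Ls'$ with $\Ls'$ upper anti-triangular. Otherwise $\Ls$ contains nontrivial orthogonal idempotents $X_1+\cdots+X_d=U$, and Lemma~\ref{lemma:idempotent} applied iteratively yields a block normal form for the $X_i$; in particular $U={\bf 1}_4$ when all $X_i$ have rank one.

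\textbf{Step 2 (pin down the generators).} With $U$ normalized we invoke the Peirce decomposition (Theorem~\ref{thm:peirce}) and substitute the relations of Theorem~\ref{thm:3d_classification} to determine the remaining generators $X,Y$. In type $1(\text{a})$ the three orthogonal idempotents must have ranks $(2,1,1)$, giving form $1(\text{a})$. In type $1(\text{b})$ the relations $X^{\bullet 2}=Y^{\bullet 2}=U$, $X\bullet Y=0$ with $U={\bf 1}_4$ force $X,Y$ to be anticommuting involutions, which yields the Kronecker form ${\bf 1}_2\otimes\SSS^2$. In types $2(\text{a})$, $2(\text{b})$, $3(\text{a})$, $3(\text{b})$ we expand the Peirce (or anti-triangular) components of $X$ and $Y$ against the algebra relations and reduce the remaining congruence freedom case by case; the refinements $2(\text{a})\leadsto\{2(\text{a1}),2(\text{a2})\}$ and $3(\text{b})\leadsto\{3(\text{b1}),3(\text{b2})\}$ record, respectively, the rank of the idempotent $X$ and the Jordan type of the two-dimensional radical as embedded in $\SSS^4$.

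\textbf{Step 3 (codimensions and the Hasse diagram).} The codimension of each orbit closure in ${\rm Gr}(3,\SSS^4)$ equals $5+\dim{\rm Stab}(\Ls)$, where ${\rm Stab}(\Ls)=\{P\in{\rm GL}(4):P\Ls P^\top=\Ls\}$ is computed directly from the explicit normal form. The containments are established by producing one-parameter families $P(t)\in{\rm GL}(4)$ realizing each covering relation as a flat limit $\lim_{t\to 0} P(t)\Ls P(t)^\top=\Ls'$; the absence of further containments follows from semicontinuous invariants such as the dimension of the idempotent locus of $\Ls$, the maximal nilpotent rank, and the abstract Jordan isomorphism type, none of which can jump upward under specialization.

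The principal obstacle is completeness in Step 2: after normalizing $U$, the residual congruence freedom may a priori admit more than the listed forms, and we must verify that every ambiguity is absorbed into one of the eight classes while distinct classes remain distinct. The main technical tool is Lemma~\ref{lemma:conjugation0} (strengthened by Lemma~\ref{lemma:conjugation}), which lets us convert similarity of matrices into simultaneous orthogonal congruence without disturbing the block structure already fixed for $U$ in Step~1.
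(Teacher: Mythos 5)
Your proposal follows the same high-level strategy as the paper: enumerate orbits by the abstract three-dimensional Jordan types from Theorem~\ref{thm:3d_classification}, normalize the unit and the Peirce/anti-triangular block structure via the lemmas of Section~\ref{sec4}, then compute codimensions by orbit--stabilizer and establish covering degenerations by explicit one-parameter families. A small divergence is that you invoke Lemma~\ref{lemm:antitriagle} to normalize the unit to $J_4$ in the type-3 cases, whereas the paper keeps $U={\bf 1}_4$ throughout and works with Jordan canonical forms via Lemma~\ref{lemma:conjugation0}; both routes lead to congruent normal forms, yours at the cost of re-expressing the residual congruence group relative to $J_4$ rather than $O(4)$.

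The genuine gap is in Step~3, in the argument that the Hasse diagram is complete. You assert that the absence of further containments ``follows from semicontinuous invariants such as the dimension of the idempotent locus, the maximal nilpotent rank, and the abstract Jordan isomorphism type,'' but you do not verify that these invariants suffice, and at least one of them is problematic: the abstract Jordan type is not upper-semicontinuous in the needed sense (type~1 algebras degenerate to type~2 and type~3 algebras all the time; that is exactly the diagram). The key invariant the paper does use is $\tau(\Ls)=\min\{\rk X : X\in\Ls\setminus\{0\}\}$, which is not on your list, and it only handles the non-containments with target $2(b)$ (where $\tau$ jumps from $1$ to $2$). For the non-containments $1(b)\not\leadsto 3(a)$ and $2(a1)\not\leadsto 3(b1)$, the values of $\tau$ and $\dim\rad$ coincide on source and target, and the paper has to fall back on explicit equations: a quadric in Pl\"ucker coordinates that vanishes on the orbit of $1(b)$ but not on $3(a)$, and another quadric vanishing on $2(a1)$ but not on $3(b1)$. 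Without a concrete verification that your invariants separate those orbit closures, this part of your argument does not close. Step~2 is also left at the level of a plan --- you state that the Peirce components ``reduce case by case'' and that the refinements $2(a)\leadsto\{2(a1),2(a2)\}$ and $3(b)\leadsto\{3(b1),3(b2)\}$ record the rank of $X$ and the Jordan type of the radical, which matches the paper's conclusions, but no argument is given that these are the only classes (the paper needs Lemma~\ref{lemma:conjugation0} for uniqueness in types $2(a)$, $3(a)$ and a more delicate argument via Lemma~\ref{lemma:conjugation} plus conjugation in $\CC^{4\times 4}$ for type $3(b)$).
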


\begin{corollary} \label{cor:Jo34}
The Jordan locus ${\rm Jo}(3,\SSS^4)$ has two irreducible components.
\end{corollary}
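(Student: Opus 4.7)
The plan is to read the corollary off the classification and the incidence diagram in Theorem \ref{thm:classification m=3,n=4}. Since the conditions of Theorem \ref{thm:characterization} are invariant under congruence, the set $\Jo(3,\SSS^4)$ is a finite union of $\GL(4)$-orbit closures, and by Theorem \ref{thm:classification m=3,n=4} these are precisely the eight closures $\overline{\GL(4)\cdot\Ls_{c}}$ as $c$ ranges over the congruence classes 1(a), 1(b), 2(a1), 2(a2), 2(b), 3(a), 3(b1), 3(b2). Each such closure is irreducible, since it is the closure of the image of the connected algebraic group $\GL(4)$ under a regular map into $\Gr(3,\SSS^4)$.

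Next I would locate the maximal elements of the Hasse diagram in Theorem \ref{thm:classification m=3,n=4}. The sources of that diagram are exactly 1(a) and 1(b), and every remaining class is linked to at least one of them by an upward chain of edges: the classes 2(a1), 2(a2), 3(a), 3(b1), 3(b2) are all specializations of 1(a), while 2(b), 3(b1), 3(b2) are specializations of 1(b). Together these chains cover every node of the diagram, so
\[
\Jo(3,\SSS^4) \,\,=\,\, \overline{\GL(4)\cdot\Ls_{1(a)}}\,\cup\, \overline{\GL(4)\cdot\Ls_{1(b)}}.
\]

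It then remains to verify that neither of these two irreducible closed subvarieties is contained in the other, so they must be the two irreducible components. The closure of the orbit of $\Ls_{1(b)}$ has codimension $10$ in $\Gr(3,\SSS^4)$, so it cannot contain the codimension $9$ orbit of $\Ls_{1(a)}$. In the opposite direction, the Hasse diagram records that the orbit of $\Ls_{1(b)}$ is not contained in $\overline{\GL(4)\cdot\Ls_{1(a)}}$. This non-degeneration is the one substantive step: one must rule out that a generic family of nets of type $\CC\times\CC\times\CC$ specializes to a net of type $\CC\times\CC^2$. This fact is built into the proof of Theorem \ref{thm:classification m=3,n=4}, for instance by exhibiting a $\GL(4)$-invariant that distinguishes the two generic isomorphism types, and given it the corollary follows immediately.
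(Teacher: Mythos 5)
Your proposal is correct and follows exactly the route the paper intends: the corollary is presented as an immediate consequence of Theorem~\ref{thm:classification m=3,n=4}, with the Hasse diagram identifying 1(a) and 1(b) as the only maximal orbit closures, and the theorem's proof itself rules out the containment of 1(b) in the closure of 1(a) via the semicontinuity of $\tau(\Ls)=\min\{\rk X:X\in\Ls\setminus\{0\}\}$ (and the reverse containment by codimension). You correctly isolated the one substantive step — that 1(b) is not a degeneration of 1(a) — and correctly deferred it to the invariant exhibited in the theorem's proof.
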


Jordan algebras of types 1(a) and 1(b) are of primary interest also for $n \geq 5$.

\begin{theorem}\label{thm:classification m=3,n=n}
Every Jordan net in $\SSS^n$ of type 1(a) is congruent to a diagonal~net
$ \Diag(x{\bf 1}_{k_1},y{\bf 1}_{k_2},z{\bf 1}_{k_3})$
for $k_1\geq k_2\geq k_3\geq1$ with sum $n$.
Jordan nets of type 1(b)~exist only for even $n$.
They are congruent to the Kronecker product of ${\bf 1}_{n/2}$ with~$\SSS^2$.
\end{theorem}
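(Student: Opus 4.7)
My plan is to treat the two types 1(a) and 1(b) separately, combining the structural data from Theorem~\ref{thm:3d_classification} with the congruence reductions in Lemmas~\ref{lemma:conjugation0} and~\ref{lemma:idempotent}. Since every invertible symmetric matrix over $\CC$ is congruent to ${\bf 1}_n$, I first apply a single overall congruence to arrange that the unit of $\Ls$ is $U={\bf 1}_n$; the Jordan product then becomes $X\bullet Y=\tfrac{1}{2}(XY+YX)$, and all further simplifications must be carried out by \emph{orthogonal} congruence, i.e.\ by elements of $O(n,\CC)$, which is the stabilizer of ${\bf 1}_n$.

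For type 1(a), Theorem~\ref{thm:3d_classification} provides a basis $\{{\bf 1}_n,X_1,X_2\}$ of $\Ls$ with $X_i^{\bullet 2}=X_i$ and $X_1\bullet X_2=0$. Setting $X_3:={\bf 1}_n-X_1-X_2$, a direct computation shows that $X_1,X_2,X_3$ are pairwise orthogonal symmetric idempotents summing to ${\bf 1}_n$, each nonzero (otherwise $\Ls$ would have dimension at most two). Let $k_i:=\rk(X_i)$, so $k_1+k_2+k_3=n$ and $k_i\geq1$. By Lemma~\ref{lemma:idempotent}, I orthogonally congrue so that $X_1=\Diag({\bf 1}_{k_1},{\bf 0}_{n-k_1})$. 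Applying the second part of the same lemma to the relation $X_1X_2+X_2X_1=0$ forces $X_2=\Diag({\bf 0}_{k_1},C)$ for some symmetric idempotent $C\in\SSS^{n-k_1}$. A further orthogonal congruence of block form $\Diag({\bf 1}_{k_1},Q_2)$ with $Q_2\in O(n-k_1,\CC)$ preserves both ${\bf 1}_n$ and $X_1$, and by Lemma~\ref{lemma:idempotent} again brings $C$ to $\Diag({\bf 1}_{k_2},{\bf 0}_{k_3})$. Then $X_3=\Diag({\bf 0}_{k_1+k_2},{\bf 1}_{k_3})$, and $\Ls$ equals $\Diag(x{\bf 1}_{k_1},y{\bf 1}_{k_2},z{\bf 1}_{k_3})$; a permutation congruence reorders the blocks so that $k_1\geq k_2\geq k_3\geq1$.

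For type 1(b), the basis satisfies $X^2=Y^2={\bf 1}_n$ and $XY+YX=0$. The matrix $X$ is a symmetric involution, similar to $\Diag({\bf 1}_p,-{\bf 1}_q)$ for some $p+q=n$, so Lemma~\ref{lemma:conjugation0} lets me assume it has this diagonal form. The anticommutation $XY+YX=0$ then forces the two diagonal blocks of $Y$ (in the $(p,q)$-splitting) to vanish, giving $Y=\bigl(\begin{smallmatrix}0&B\\ B^\top&0\end{smallmatrix}\bigr)$. Expanding $Y^2={\bf 1}_n$ yields $BB^\top={\bf 1}_p$ and $B^\top B={\bf 1}_q$, which can hold simultaneously only if $p=q=n/2$; in particular $n$ must be even. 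A residual block-orthogonal congruence $\Diag(Q_1,Q_2)\in O(n/2,\CC)\times O(n/2,\CC)$ fixes both ${\bf 1}_n$ and $X$, and transforms $B\mapsto Q_1BQ_2^\top$; taking $Q_1={\bf 1}_{n/2}$ and $Q_2=B^\top\in O(n/2,\CC)$ reduces $B$ to ${\bf 1}_{n/2}$. The resulting net is spanned by matrices of the form $S\otimes{\bf 1}_{n/2}$ with $S\in\SSS^2$ (using the convention $A\otimes B=(a_{ij}B)_{ij}$), and a permutation congruence identifies it with the stated form ${\bf 1}_{n/2}\otimes\SSS^2$.

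The main subtlety I anticipate is maintaining the normalization $U={\bf 1}_n$ throughout, which requires every reduction to be an \emph{orthogonal} congruence. Lemmas~\ref{lemma:conjugation0} and~\ref{lemma:idempotent} supply this for a single idempotent or involution; the real work lies in verifying that after the first generator has been placed in canonical form, the residual orthogonal stabilizer still acts transitively enough on the remaining datum (the symmetric idempotent $C$ in type~1(a), and the complex orthogonal matrix $B$ in type~1(b)) to reach the claimed canonical form.
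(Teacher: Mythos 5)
Your proof follows essentially the same route as the paper's: for type~1(a), form the third orthogonal idempotent $X_3={\bf 1}_n-X_1-X_2$ and apply Lemma~\ref{lemma:idempotent} twice; for type~1(b), diagonalize the involution $X$, use anticommutation to force the off-diagonal block form of $Y$, deduce $p=q$ from $BB^\top={\bf 1}_p$, $B^\top B={\bf 1}_q$, and normalize $B$ by a residual block-orthogonal congruence (the paper uses $\Diag(P^\top,{\bf 1}_r)$, which is the $Q_1=B^\top$, $Q_2={\bf 1}$ instance of your formula). One small slip: with $B\mapsto Q_1 B Q_2^\top$ and $Q_1={\bf 1}_{n/2}$, taking $Q_2=B^\top$ gives $B\cdot B=B^2$, not ${\bf 1}_{n/2}$; you want $Q_2=B$ (so $Q_2^\top=B^\top=B^{-1}$), or equivalently $Q_1=B^\top$, $Q_2={\bf 1}_{n/2}$ as in the paper.
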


We note that
Theorem \ref{thm:classification m=3,n=n}
does not account for all irreducible components of
${\rm Jo}(3,\SSS^n)$ when $n \geq 5$. At present we do not
know these decompositions.

\begin{proposition} \label{prop:tau}
The Jordan net $\Ls_* =uJ_5+x(E_{15}+E_{51})+y(E_{14}+E_{41}) $ in $\SSS^5$ has type 2(b).
It  is not in the closure of the diagonalizable locus 1(a) in ${\rm Jo}(3,\SSS^5)$.
\end{proposition}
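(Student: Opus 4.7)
Proof plan.

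The proposition has two parts — identifying $\Ls_*$ as type 2(b), and showing it is not in the closure of the 1(a) locus — which I plan to handle separately: the first by direct computation and the second by a semicontinuity argument built around a closed invariant in the Grassmannian.

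For the type, I would take $U := J_5$ as the Jordan unit (legitimate since $J_5^2 = \mathbf{1}_5$, so $U^{-1} = J_5$) and set $X := E_{15}+E_{51}$, $Y := E_{14}+E_{41}$. Using the product rule $E_{ab}E_{cd} = \delta_{bc} E_{ad}$ one quickly obtains $X J_5 = E_{11}+E_{55}$ and $Y J_5 = E_{12}+E_{45}$, whence $X \bullet_U X = X J_5 X = X$, $Y \bullet_U Y = Y J_5 Y = 0$, and $X \bullet_U Y = \tfrac{1}{2}(E_{14}+E_{41}) = Y/2$. These are precisely the relations of case 2(b) in Theorem~\ref{thm:3d_classification}.

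For the non-closure claim I would use the invariant ``contains a nonzero rank-one matrix'', which cuts out a Zariski-closed subset $S \subset \Gr(3, \SSS^5)$: namely, the image in the Grassmannian of the closed incidence $\{(\Ls,[A]) : [A] \in \PP(\Ls) \cap \nu_2(\PP^4)\}$, which is proper over $\Gr(3,\SSS^5)$. By Theorem~\ref{thm:classification m=3,n=n}, every type 1(a) net in $\SSS^5$ is congruent to $\Diag(x\mathbf{1}_{k_1}, y\mathbf{1}_{k_2}, z\mathbf{1}_{k_3})$ for $(k_1, k_2, k_3) \in \{(3,1,1), (2,2,1)\}$; in both cases $k_3 = 1$, so $E_{55}$ is a rank-one element of the diagonal representative, and since congruence preserves rank the full 1(a) orbit and its closure lie inside $S$. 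To finish, I would verify $\Ls_* \notin S$ by elementary linear algebra: any rank-one element of $\SSS^5$ over $\CC$ is of the form $vv^\top$, and the diagonal $(0,0,u,0,0)$ of a general $A \in \Ls_*$ forces $v_i = 0$ for $i \neq 3$, so $A = v_3^2 E_{33}$; the vanishing of the remaining off-diagonal entries then yields $y = 0$, $u+x = 0$, $u = 0$, hence $A = 0$. The main obstacle is conceptual — spotting this rank-one closed invariant that separates case 2(b) from case 1(a); the individual calculations are all very short once it is in hand.
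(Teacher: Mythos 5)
Your proof is correct and follows essentially the same route as the paper: the paper introduces the invariant $\tau(\Ls) = \min\{\rk X : X \in \Ls \setminus \{0\}\}$, notes $\{\Ls : \tau(\Ls) \leq k\}$ is closed as the projection of an incidence variety, observes $\tau = 1$ on type 1(a) because $k_3 = 1$, and asserts $\tau(\Ls_*) = 2$; your ``contains a nonzero rank-one matrix'' set is exactly $\{\tau \leq 1\}$ with the same incidence-variety closedness argument. You additionally carry out the explicit $\bullet_{J_5}$ computation establishing type 2(b) and the $vv^\top$ check that $\Ls_*$ has no rank-one element, both of which the paper leaves implicit.
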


\begin{proof}
For any linear space $\Ls \subset \SSS^n$ we consider 
 $\tau(\Ls):=\min\{\rk X\mid X\in\Ls \setminus \{0\}\}$.
 This invariant equals $1$ for all Jordan algebras $\Ls$ of type 1(a), since
 $k_3 = 1$ for $n=5$.
 For the special net above, we see that $\tau(\Ls_*) = 2$.
 It now suffices to note that $\{\Ls\in\Gr(m,\SSS^n)\mid \tau(\Ls)\leq k\}$  is a 
 closed set because it is the projection along $\PP(\SSS^n)$ of the incidence variety
$\{(\Ls,X)\in\Gr(m,\SSS^n)\times\PP(\SSS^n)\mid X\in\Ls,\rk X\leq k\}$.
\end{proof}

The situation is easier if we restrict to the case of interest in applications.
Let $\Ls$ be a real Jordan subalgebra of symmetric matrices whose unit $U$ is positive definite. Then $\Ls$ is {\em formally real}, i.e. if $X_1^{\bullet2}+\cdots+X_k^{\bullet 2}=0$, then $X_1,\ldots,X_k=0$.  Formally real Jordan algebras have zero radical \cite{Jordan_1934} and this persists if we view
them as Jordan algebras over $\CC$.
We define the {\em Euclidean Jordan locus} ${\rm EJo}(3,\SSS^n)$ to be the closure 
in ${\Gr}(3,\SSS^n) $ of the set of such Jordan algebras.
 For our study with $m=3$, we get that formally real Jordan nets 
  have type 1(a) or~1(b). 

\begin{corollary} 
The number of irreducible components of 
${\rm EJo}(3,\SSS^n)$ is the coefficient of the $t^n$ in the generating function
\[
t^3/((1-t)(1-t^2)(1-t^3)) + t^2/(1-t^2).
\]
\end{corollary}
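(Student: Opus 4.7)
The plan is to reduce the count of irreducible components of ${\rm EJo}(3,\SSS^n)$ to an enumeration of complex congruence orbits of semisimple Jordan nets in $\SSS^n$. A real Jordan subalgebra whose unit is positive definite is formally real, and by Jordan--von Neumann--Wigner it has trivial radical; since radical vanishing is cut out by polynomial conditions, the same holds after extending scalars to $\CC$. Applying Theorem~\ref{thm: strucure Jordan Algebras} together with the classification in Theorem~\ref{thm:3d_classification}, every such net has isomorphism type $\CC\times\CC\times\CC$ (type 1(a)) or $\CC\times\CC^2$ (type 1(b)).

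Next I would apply Theorem~\ref{thm:classification m=3,n=n} to enumerate congruence orbits. Type 1(a) nets correspond bijectively to partitions $(k_1,k_2,k_3)$ of $n$ with $k_1\geq k_2\geq k_3\geq 1$, the representative being $\Diag(x\mathbf{1}_{k_1},y\mathbf{1}_{k_2},z\mathbf{1}_{k_3})$. The number of such partitions has generating function $\tfrac{t^3}{(1-t)(1-t^2)(1-t^3)}$, obtained by shifting partitions with at most three parts by $(1,1,1)$. Type 1(b) contributes a single orbit for each even $n\geq 2$ and none for odd $n$, yielding $\tfrac{t^2}{1-t^2}$. Each congruence orbit is irreducible as the image of $\GL_n$, so its closure in $\Gr(3,\SSS^n)$ is an irreducible subset of ${\rm EJo}(3,\SSS^n)$, and ${\rm EJo}(3,\SSS^n)$ is the union of these closures.

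The main obstacle is to show that the orbit closures are pairwise incomparable, so that each is a genuine irreducible component. The natural invariant is the ternary form $F_\Ls(x,y,z)=\det(xA+yB+zC)$ attached to a basis of $\Ls$, well-defined modulo the $\GL_3$-action on $(x,y,z)$ and scaling. For type 1(a) with partition $(k_1,k_2,k_3)$ the form $F_\Ls$ factors as $\ell_1^{k_1}\ell_2^{k_2}\ell_3^{k_3}$ with three linearly independent linear factors, while for type 1(b) with $n=2r$ it factors as $q^r$ for a smooth ternary conic $q$. Root multiplicities are upper semicontinuous under specialization in $\Gr$, so the closure of the 1(a)-orbit for $\lambda$ can contain only forms whose linear-factor multiplicity partition is a coarsening of $\lambda$. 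Since no proper coarsening of a three-part partition has three parts, and since $q^r$ with $q$ irreducible cannot degenerate to a product of three distinct lines nor the reverse, the orbit closures are pairwise distinct. Summing the two generating functions then yields the claimed formula.
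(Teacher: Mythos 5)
Your argument follows the paper's strategy: invoke the Jordan--von Neumann--Wigner structure theorem to reduce to types 1(a) and 1(b), apply Theorem~\ref{thm:classification m=3,n=n} to enumerate congruence orbits, and translate those counts into the two generating functions (the first via the bijection $(a,b,c)\mapsto(a+b+c+1,b+c+1,c+1)$ with solutions of $a+2b+3c=n-3$). The paper's own proof consists essentially only of this enumeration. You additionally supply the argument that the listed orbit closures are pairwise incomparable, which is needed for each to be a genuine irreducible component of ${\rm EJo}(3,\SSS^n)$. The paper leaves this step implicit (it addresses only one pair explicitly, via the invariant $\tau$ in Proposition~\ref{prop:tau}, and cites Theorem~\ref{thm:classification m=3,n=4} for $n=4$). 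Your choice of invariant — the factorization pattern of $F_\Ls=\det(xA+yB+zC)$ modulo $\GL_3$ and scaling — is correct and natural: a proper degeneration of a point in the Chow variety of $\PP^2$ with multiplicity partition $(k_1,k_2,k_3)$ and three independent lines merges factors and thus lands in a stratum with fewer distinct lines, while a power of a smooth conic degenerates only to powers of line pairs or double lines and cannot be a limit of triple-line products. One small caveat worth flagging if you write this up: you should justify that $F_\Ls$ is well-defined and nonzero on every orbit closure you compare (this holds here since every net of type 1(a) or 1(b) is regular), and that the map to factorization type is continuous on that locus. Modulo this polish, your proof is correct and in fact more complete than the paper's.
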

\begin{proof}
The first summand is the generating function of triples of $a,b,c\geq0$ with $a+2b+3c=n-3$. These correspond to triples of $k_1\geq k_2\geq k_3\geq 1$ with sum $n$ via  \vspace{-0.16cm}
\[
(a,b,c)\mapsto(a+b+c+1,b+c+1,c+1). \vspace{-0.16cm}
\]
The second generating function accounts for the Jordan nets of type 1(b).
\end{proof}

In the remainder of this section we present the proofs of our two theorems.

\begin{proof}[Proof of Theorem \ref{thm:classification m=3,n=n}]
Let $\mathcal{L}$ be a Jordan net of type 1(a). Then $\mathcal{L}$ is congruent to 
$\,\Diag(x{\bf 1}_{k_1},y{\bf 1}_{k_2},z{\bf 1}_{k_3}) $.
This is seen by two applications of Lemma~\ref{lemma:idempotent}.
Suppose $\mathcal{L}$ has unit $U = {\bf 1}_n$ and type 1(b). After applying an orthogonal congruence, we may assume that $X=\Diag({\bf 1}_r,-{\bf 1}_{n-r})$ for some $1\leq r\leq n-1$. The condition $XY+YX=0$ now implies 
$ Y= \begin{small} \begin{pmatrix} 0&P\\
P^\top&0\end{pmatrix} \end{small} $
for some $P\in\CC^{r\times(n-r)}$. The condition $Y^2={\bf 1}_n$ implies  $PP^\top={\bf 1}_r$ and $P^\top P={\bf 1}_{n-r}$. This is only possible when $2r=n$. By applying the orthogonal
 congruence $\Diag(P^\top, {\bf 1}_r)$, we get the desired form.
\end{proof}

\begin{proof}[Proof of Theorem \ref{thm:classification m=3,n=4}]
We proceed case-by-case, starting from the classification of
Jordan algebras in Theorem~\ref{thm:3d_classification}. 
Let $U,X,Y$ be a basis of $\mathcal{L}$, where we
 assume $U={\bf 1}_4$. The types 1(a) and 1(b) are already covered by the previous proof.

Suppose that $\mathcal{L}$ has type 2(a). We can assume $X=\Diag({\bf 1}_{4-k},{\bf 0}_k)$ and $Y=\Diag({\bf 0}_{4-k},Z)$ with $Z^2={\bf 0}_k$ for some $k\in\{2,3\}$.
Up to orthogonal congruence, there is only one 
$Z \in \SSS^k$ with $Z^2 = {\bf 0}_k\neq Z$, 
as there is only one option for its Jordan canonical form. 
This uses Lemma~\ref{lemma:conjugation0}.
 So, we get two classes of this type.

Suppose that $\mathcal{L}$ has type 2(b). Then we can assume that $X=\Diag({\bf 1}_k,{\bf 0}_{4-k})$. The condition $XY+YX=Y$ implies 
$ Y= \begin{small} \begin{pmatrix} 0&P\\
P^\top&0\end{pmatrix} \end{small} $
for some matrix $P\in\CC^{k\times(4-k)}$. The condition $Y^2={\bf 0}_4$ implies $PP^\top={\bf 0}_k$ and $P^\top P={\bf 0}_{4-k}$. As $P$ is nonzero, this is only possible when $k=2$. So $P=ab^\top$ for some nonzero vectors $a,b\in\CC^2$ with $a^\top a=b^\top b=0$. Such matrices $Y$ form a single orbit of $\Diag({\rm O}(2),{\rm O}(2))$. So we get one class of this type.
Suppose that  $\mathcal{L}$ has type 3(a). Then $X^2=Y$ and $X^3=0$. By Lemma~\ref{lemma:conjugation0}
and appealing to Jordan canonical forms as above, there is a unique $X$ up to 
    orthogonal congruence. So we get one class of this type.

Consider $\mathcal{L}$ of type 3(b). We have $X^2=Y^2=XY+YX=0$. Since $X$, $Y$ or~$X+Y$ has rank~$2$, we may assume $\rk X=2$. One can show, using Lemma~\ref{lemma:conjugation0},
 that we may assume $\rk Y=1$.  Using Lemma~\ref{lemma:conjugation} 
 and  conjugating $\mathcal{L}$ in $\CC^{n \times n}$, we~get\vspace{-0.15cm}
$$
X=\begin{pmatrix}
0&{\bf 1}_2\\
0&0
\end{pmatrix}\,\,\,\mbox{ and } \,\,\, Y=\begin{pmatrix}
0&Z\\0&0
\end{pmatrix}\vspace{-0.15cm}
\,\,\quad \hbox{where $\,\rk Z =1$.}
$$
 By conjugating with a matrix $\Diag(P,P)$ and scaling, we find $Z=E_{22}$ or $Z=E_{12}$. 
 So we get two classes. In one of them,  $\rad\mathcal{L}$ is spanned by two rank-$1$ matrices.

To identify the poset of inclusions,
 consider the associated nets of quadrics
 $ (a,b,c,d)\Ls(a,b,c,d)^T$.
In this notation, type 1(a) is written as $x(a^2+b^2)+yc^2+zd^2$.
Replacing $(a,b,c,d)$ by $(a,b,c,c+td)$ in 1(a), 
where $t \in \RR \setminus \{0\}$, we get \vspace{-0.15cm}
$$
x(a^2+b^2)+y'c^2+z'(2cd+td^2)\vspace{-0.15cm}
$$
for $y'=y+z$ and $z'=tz$. As $t\to0$, we obtain type 2(a1).  Similarly:

$
\begin{array}{llclcl}
\bullet\mbox{ We get ~1(a)}&\!\!\!\to\!\!\!&\mbox{2(a2)}&\mbox{using }(c+t^2a,tb,c,d).\\
\bullet\mbox{ We get ~1(b)}&\!\!\!\to\!\!\!&\mbox{2(b)}&\mbox{using }(-a,b,i(a+tc),i(b+td)).
\end{array}
$

$
\begin{array}{llclcl}
\bullet\mbox{ We get ~1(b)}&\!\!\!\to\!\!\!&\mbox{3(b1)}&\mbox{using }(a,tb,tc,d).\\
\bullet\mbox{ We get ~2(a1)}&\!\!\!\to\!\!\!&\mbox{3(a)}&\mbox{using }(a,b,a+tb+t^2c,td).\\
\bullet\mbox{ We get ~2(a2)}&\!\!\!\to\!\!\!&\mbox{3(a)}&\mbox{using }(a,td,t(tc-b),a+tb).\\
\bullet\mbox{ We get ~2(a2)}&\!\!\!\to\!\!\!&\mbox{3(b1)}&\mbox{using }(a,d+tb,c,d).\\
\bullet\mbox{ We get ~2(b)}&\!\!\!\to\!\!\!&\mbox{3(b2)}&\mbox{using }(c+ta,d+tb,c,d).\\
\bullet\mbox{ We get ~3(a)}&\!\!\!\to\!\!\!&\mbox{3(b2)}&\mbox{using }(a,b,tc,i(b-td)).\\
\bullet\mbox{ We get ~3(b1)}&\!\!\!\to\!\!\!&\mbox{3(b2)}&\mbox{using }(a,a+tb,c,-c+td).
\end{array}
$

\noindent The codimensions were found by direct computation.
It remains to show that there is no inclusion of orbit closures when
there is no edge in our Hasse diagram.
The arrow 1(a) $\rightarrow$ 2(b) is missing because $\tau$ cannot go up in limits;
see Proposition \ref{prop:tau}.
  The absence of other inclusions is certified by explicit 
  polynomials that vanish on one orbit, but not on another. For example, the polynomial for 
  the orbit 1(b)    displayed in (\ref{eq:onerep}) 
  does not vanish on the  orbits 1(a), 2(a1), 2(a2) and 3(a).
  Similarly, $4 p_{012} p_{146} - 4 p_{013} p_{145} -
   p_{014} p_{056} - p_{014} p_{126} + p_{014} p_{135} - p_{014} p_{234}
       + p_{024} p_{046} - p_{034}  p_{045}    \,   $ vanishes on
        2(a1) but not on  3(b1).
  \end{proof}

\section{Nets of Quadrics}
\label{sec6}

In this section we offer a more detailed study of Jordan loci in the case $m=3$.

\begin{proposition} \label{prop:33}
The Jordan locus ${\rm Jo}(3,\SSS^3) $ consists
of nets that are generated by three rank-$1$ conics in $\PP^2$.
This  is an irreducible variety of codimension $3$ 
in the Grassmannian ${\rm Gr}(3,\SSS^3) $, and it has
degree $57$ in the Pl\"ucker embedding in $ \PP^{19}$.
Its prime ideal is generated by $62$ quadrics
in the $20$ Pl\"ucker coordinates.
\end{proposition}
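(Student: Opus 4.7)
The plan is to identify $\Jo(3,\SSS^3)$ with the closure of the locus $W\subset\Gr(3,\SSS^3)$ parametrized by the rational map
$$
\psi:(\PP^2)^3 \dashrightarrow \Gr(3,\SSS^3),\quad (v_1,v_2,v_3)\mapsto\spa\{v_1v_1^\top,v_2v_2^\top,v_3v_3^\top\},
$$
which is defined on the open locus where the three rank-$1$ matrices are linearly independent. The easy containment $W\subseteq\Jo(3,\SSS^3)$ rests on an explicit choice of unit: given independent $v_1,v_2,v_3$, form $V=[v_1\,|\,v_2\,|\,v_3]\in\GL(3)$, pick any invertible diagonal $D$, and set $U=VD^{-1}V^\top\in\mathcal{L}$. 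Then $V^\top U^{-1}V=D$ is diagonal, and the product formula $(v_iv_i^\top)\bullet_U(v_jv_j^\top)=\tfrac{1}{2}(v_i^\top U^{-1}v_j)(v_iv_j^\top+v_jv_i^\top)$ shows that $\mathcal{L}$ is $\bullet_U$-closed, hence Jordan by Theorem~\ref{thm:characterization}.

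For the reverse inclusion $\Jo(3,\SSS^3)\subseteq\overline W$, I would invoke the classification in Theorem~\ref{thm:3d_classification}. Type $1(\mathrm{b})$ (the spin factor $\CC\times\CC^2$) has no embedding in $\SSS^3$ because the argument of Theorem~\ref{thm:classification m=3,n=n} forces $n$ even. Type $1(\mathrm{a})$ is congruent to $\Diag(x,y,z)=\spa\{E_{11},E_{22},E_{33}\}\in W$. For each non-semisimple class $2(\mathrm{a})$, $2(\mathrm{b})$, $3(\mathrm{a})$, $3(\mathrm{b})$, I would exhibit an explicit smoothing in $W$; for example,
$$
\spa\bigl\{E_{11},\,(e_2+te_3)(e_2+te_3)^\top,\,(e_2-te_3)(e_2-te_3)^\top\bigr\}=\spa\bigl\{E_{11},\,E_{22}+t^2E_{33},\,E_{23}+E_{32}\bigr\}
$$
lies in $W$ for $t\neq 0$ and specializes to a type $2(\mathrm{a})$ Jordan net $\spa\{E_{11},E_{22},E_{23}+E_{32}\}$ (with unit $E_{11}+E_{23}+E_{32}$) as $t\to 0$. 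The main obstacle is producing such a smoothing family for every non-semisimple congruence class: this replaces an abstract ``orbit closure'' statement, which the paper does not establish in the generality needed for general $n$, by a case-by-case construction driven by the canonical forms from Theorem~\ref{thm:3d_classification} and controlled by Lemma~\ref{lemma:conjugation} (applied in $\SSS^3$).

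Once the equality $\Jo(3,\SSS^3)=\overline W$ is established, the dimension follows immediately: $\psi$ has finite generic fibers (the $S_3$-action permuting ordered triples), so $\overline W$ is irreducible of dimension $6$, that is, of codimension $3$ in the $9$-dimensional Grassmannian. The remaining claims---degree $57$ and the $62$ minimal quadric generators of the prime ideal in $\PP^{19}$---I would verify by computer algebra. Concretely, one pulls back $\psi$ through the Pl\"ucker embedding in \texttt{Macaulay2} (equivalently, one computes the ideal directly from the linear-dependence conditions in the introduction involving $X_i\adj(U)X_j+X_j\adj(U)X_i$), saturates against the Pl\"ucker ideal, and reads off the Hilbert polynomial and a minimal generating set of the resulting prime.
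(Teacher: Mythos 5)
Your route is genuinely different from the paper's. The paper does not use Theorem~\ref{thm:3d_classification} here at all: it appeals to Wall's explicit classification of nets of plane conics \cite{Wall77}, checks for each Wall stratum whether $\mathcal{L}^{-1}$ is linear (only types E, G, H pass), reads off from Wall's Figure 5 and Table 1 that G and H lie in the closure of E and that E is the rank-$1$ stratum, and then cites \cite[Proposition 3.1]{BS} for the degree $57$ and the $62$ quadrics. You instead parametrize the rank-$1$ locus $W$ directly, prove $W\subseteq\Jo(3,\SSS^3)$ by an explicit unit $U=VD^{-1}V^\top$, and try to prove the reverse inclusion from first principles via the abstract classification. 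Your containment argument is correct and cleaner than what one finds in the literature: the observation that $V^\top U^{-1}V$ is diagonal, so the $v_iv_i^\top$ are orthogonal idempotents up to scale, is exactly right. Your approach has the advantage of being self-contained (no reliance on Wall) and in principle more uniform in $n$.

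However, there is a real gap in the reverse inclusion. To conclude $\Jo(3,\SSS^3)\subseteq\overline W$ you must first classify Jordan nets in $\SSS^3$ up to congruence, i.e.\ prove an $n=3$ analogue of Theorem~\ref{thm:classification m=3,n=4}, and then exhibit a smoothing into $W$ for each class that appears. You dispatch type $1(\mathrm{b})$ and give one degeneration for one representative of type $2(\mathrm{a})$, but nothing for the remaining possibilities, and you have not established that your list of representatives is exhaustive: Theorem~\ref{thm:3d_classification} only classifies abstract $3$-dimensional Jordan algebras, and a single abstract type can have several non-congruent embeddings in $\SSS^n$ (as 2(a1) vs.\ 2(a2) and 3(b1) vs.\ 3(b2) show for $n=4$). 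Lemmas~\ref{lemma:conjugation0}--\ref{lemma:idempotent} give you the tools to run this classification for $n=3$, but until you do so and produce a smoothing for each resulting orbit, the equality $\Jo(3,\SSS^3)=\overline W$ is not proven. The paper sidesteps exactly this by importing Wall's closure diagram. The computational claims (codimension $3$, degree $57$, $62$ quadrics) are, in both your plan and the paper, ultimately verified by machine, so there is no disagreement there; the paper simply cites \cite{BS} rather than recomputing.
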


\begin{proof}
Here we use the Wall's classification \cite{Wall77} of nets of conics.
For each net $\mathcal{L}$ in his list, we computed 
$\mathcal{L}^{-1}$. The surface $\mathcal{L}^{-1}$ is  linear
only for Wall's types E, G and H. According to 
\cite[Figure 5]{Wall77}, the strata G and H are contained in the closure of
stratum E, which has codimension $3$, by \cite[Table 2]{Wall77}.
 We see in \cite[Table 1]{Wall77} that E corresponds to nets
spanned by rank-$1$ conics. Hence the closure of stratum E is
an irreducible variety. It equals ${\rm Jo}(3,\SSS^3) $.
The results about the ideal and degree of this Jordan locus are  in
\cite[Proposition 3.1]{BS}. 
\end{proof}

The ideal in Proposition \ref{prop:33} simplifies greatly if
we restrict to  $ {\rm Jo}_{\bf 1}(3,\SSS^3)$. Namely, for the net
$\mathcal{L} = {\rm span} \{{\bf 1}_3,\, X,\, Y \}$, with
$X = (x_{ij})$ and $Y= (y_{ij})$, we obtain \vspace{-0.14cm} \begin{equation}
\label{eq:buchloe} \begin{small}
\begin{matrix}
\!\! \bigl\langle \,
 (x_{11} y_{12}-x_{12} y_{11})+(x_{12} y_{22}-x_{22} y_{12})+(x_{13} y_{23}-x_{23} y_{13}), \\ \quad
 (x_{11} y_{23} - x_{23}  y_{11} )+(x_{12} y_{13}-x_{13} y_{12})+2 (x_{22} y_{23}-x_{23} y_{22}), \\ \qquad
  2 (x_{11} y_{13}-x_{13} y_{11})+(x_{12} y_{23}-x_{23} y_{12})-(x_{13} y_{22}-x_{22} y_{13}) \bigr\rangle .
\end{matrix} \end{small} \vspace{-0.14cm}
\end{equation}
This prime ideal of codimension $3$ defines the Jordan locus
in  ${\rm Gr}_{\bf 1}(3, \SSS^3)$.

We now assume $n \geq 3$.
For any net $\mathcal{L} \in {\rm Gr}(3,\SSS^n)$,
the reciprocal  $\mathcal{L}^{-1}$ 
 is a surface in $\PP (\SSS^n)
= \PP^{\binom{n+1}{2}-1}$.
This is a linear plane precisely when $\mathcal{L}$ is a Jordan algebra.
We now examine the other extreme case, when
$\mathcal{L}$ is generic in~${\rm Gr}(3,\SSS^n)$.

\begin{theorem} \label{thm:chow}
 The following conditions are equivalent for a net of quadrics $\mathcal{L}$:  \vspace{-0.15cm}
\begin{enumerate}
\item Every nonzero complex matrix in $\mathcal{L}$ has rank $\geq n-1$.  \vspace{-0.15cm}
\item The linear span of the surface $\mathcal{L}^{-1}$ is the ambient projective space $\PP(\SSS^n)$.\vspace{-0.15cm}
\item $\mathcal{L}^{-1}$ is linearly isomorphic to the $(n-1)$st Veronese embedding of $\PP^2$.
 \vspace{-0.15cm}
\item The Chow matrix of $\mathcal{L}$ is invertible.
\end{enumerate}
\end{theorem}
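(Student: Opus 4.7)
My plan is to introduce the Chow matrix $A$ through a factorization of the adjugate map via the Veronese, extract (2), (3), (4) as equivalent by linear algebra on this factorization, and then identify $\det A$ with the Chow form of $R_{n-2}=\{X\in\SSS^n\mid\rk X\le n-2\}$ by a degree match to bridge condition (1). Concretely, fix a basis $X_0,X_1,X_2$ of $\Ls$ and write $X=x_0X_0+x_1X_1+x_2X_2$. Each of the $\binom{n+1}{2}$ independent entries of $\adj(X)$ is a form of degree $n-1$ in $(x_0,x_1,x_2)$, and expressing these in the monomial basis of $\CC[x_0,x_1,x_2]_{n-1}$, which also has dimension $\binom{n+1}{2}$, defines the square Chow matrix $A$ via $\mathrm{vec}(\adj(X))=A\cdot\nu_{n-1}(x)$, where $\nu_{n-1}$ is the $(n-1)$st Veronese embedding of $\PP^2$. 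The rational map $\PP^2\dashrightarrow\PP(\SSS^n)$ defining $\Ls^{-1}$ therefore factors as $A\circ\nu_{n-1}$.

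From this factorization the equivalences (2)$\Leftrightarrow$(3)$\Leftrightarrow$(4) drop out. The Veronese is non-degenerate, so the span of $\Ls^{-1}=\overline{A(\nu_{n-1}(\PP^2))}$ equals the image of $A$; since $A$ is square, $\Ls^{-1}$ spans iff $A$ is surjective iff $A$ is invertible. An invertible $A$ is a linear isomorphism of ambient spaces that carries the Veronese surface onto $\Ls^{-1}$, which is (3); and (3)$\Rightarrow$(2) is immediate. The implication (4)$\Rightarrow$(1) is also direct: for $0\ne x\in\CC^3$, $\nu_{n-1}(x)\ne0$, so $\adj(X)=A\,\nu_{n-1}(x)\ne0$ and $\rk X\ge n-1$.

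The work is (1)$\Rightarrow$(4). The easy inclusion $\{(1)\text{ fails}\}\subseteq\{\det A=0\}$ holds because any $X\in\Ls$ of rank $\le n-2$ has $\adj(X)=0$, placing $\nu_{n-1}(x)$ in $\ker A$. To promote this to equality I would compare the Pl\"ucker degrees of $\det A$ and of the Chow form of $R_{n-2}$. Each entry of $A$ is of degree $n-1$ in the entries of $(X_0,X_1,X_2)$, and a base change transforms $A$ by $\mathrm{Sym}^{n-1}$ of the change-of-basis matrix, so $\det A$ scales by $(\det P)^{\binom{n+1}{3}}$, i.e.\ descends to a polynomial of degree $\binom{n+1}{3}$ in the Pl\"ucker coordinates of $\Gr(3,\SSS^n)$. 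By the Harris--Tu formula, $\deg R_{n-2}=\binom{n+1}{3}$, which is also the Pl\"ucker degree of its irreducible Chow form. Since this irreducible Chow form divides $\det A$ and both have the same degree, the two polynomials are proportional (with nonzero constant, as $A$ is generically invertible), yielding $\det A=0$ iff $\Ls\cap R_{n-2}\ne\{0\}$ iff (1) fails.

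The main obstacle is precisely this last step: condition (1) only forces $\ker A$ to miss the $2$-dimensional Veronese cone, which by itself does not force $\ker A=\{0\}$ in the $\binom{n+1}{2}$-dimensional ambient. The required structural input is the Chow-form identification of $\det A$, and the delicate part is the degree bookkeeping under the $GL(3)$-action on bases of $\Ls$.
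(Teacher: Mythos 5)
Your proof is correct and follows essentially the same route as the paper: both establish $(2)\Leftrightarrow(3)\Leftrightarrow(4)$ from the factorization $\adj\circ\Ls = A\circ\nu_{n-1}$, and both obtain $(1)\Leftrightarrow(4)$ by identifying $\det A$ with the Chow form of the rank-$(n-2)$ locus via an irreducibility-plus-degree-matching argument (the paper's Lemma~\ref{lem:stiefel}, done in dual Stiefel coordinates rather than your Pl\"ucker coordinates, but these are equivalent since $3\binom{n+1}{3}=(n-1)\binom{n+1}{2}$). Incidentally, your $\binom{n+1}{3}$ is the correct degree of that determinantal variety; the printed $\binom{n+2}{3}$ in the paper's Lemma~\ref{lem:stiefel} is a typo.
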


To explain this result, we must first define the {\em Chow matrix}.
Fix a basis $X,Y,Z$  of $\mathcal{L}$, and let $x,y,z$ be unknowns.
The Chow matrix is
square of size $\binom{n+1}{2} \times \binom{n+1}{2}$, and its
entries are homogeneous polynomials of degree $n-1$ in
the entries of $X,Y,Z$. The columns are labeled by the monomials
of degree $n-1$ in $x,y,z$, and the rows are labeled by
pairs $(i,j)$ where $1 \leq i \leq j \leq n$. Consider the entry in position
$(i,j)$  of the adjoint matrix of $xX+yY+zZ$. This is a polynomial of
degree $n-1$ in $x,y,z$. Its coefficients form the
row indexed $(i,j)$ of the Chow matrix. The entry in a given column is
the coefficient of the monomial label.

\begin{example}[$n=3$]
The Chow matrix of three plane conics has format $6 \times 6$: \vspace{-0.15cm}
$$ 
\begin{tiny} 
\begin{pmatrix}
x_{22} x_{33}-x_{23}^2 &
 x_{22} y_{33}-2 x_{23} y_{23}+x_{33} y_{22} &
  x_{22} z_{33}-2 x_{23} z_{23}+x_{33} z_{22} &
      y_{22} y_{33}-y_{23}^2 &
  \cdots & \cdots  \phantom{spa} \\
x_{13} x_{23} -x_{12} x_{33} &x_{13} y_{23}-x_{12} y_{33}+x_{23} y_{13}-x_{33} y_{12} &
 x_{13} z_{23}-x_{12} z_{33}+x_{23} z_{13}-x_{33} z_{12} &
  y_{13} y_{23} -y_{12} y_{33}&
  \cdots & \cdots \phantom{spa} \\
 x_{12} x_{23}-x_{13} x_{22} & x_{12} y_{23}-x_{13} y_{22}-x_{22} y_{13}+x_{23} y_{12} &
  x_{12} z_{23}-x_{13} z_{22}-x_{22} z_{13}+x_{23} z_{12} & y_{12} y_{23}-y_{13} y_{22} &
  \cdots & \cdots \phantom{spa} \\
x_{11} x_{33}-x_{13}^2 & x_{11} y_{33}-2 x_{13} y_{13}+x_{33} y_{11} & 
x_{11} z_{33}-2 x_{13} z_{13}+x_{33} z_{11} &
 y_{11} y_{33}-y_{13}^2 &  
 \cdots & \cdots \phantom{spa} \\
x_{12} x_{13} -x_{11} x_{23} &
x_{12} y_{13} -x_{11} y_{23}+x_{13} y_{12}-x_{23} y_{11} &
x_{12} z_{13} -x_{11}  z_{23}+x_{13} z_{12}-x_{23} z_{11} &
  y_{12} y_{13} -y_{11} y_{23} &
\cdots & \cdots \phantom{spa} \\  
x_{11} x_{22}-x_{12}^2 & x_{11} y_{22}-2 x_{12} y_{12}+x_{22} y_{11} &
 x_{11} z_{22}-2 x_{12} z_{12}+x_{22} z_{11} &
  y_{11} y_{22}-y_{12}^2 &
 \cdots & \cdots \phantom{spa} \\
\end{pmatrix}.
\end{tiny} \vspace{-0.13cm}
$$
The row labels $(1,1),(1,2),(1,3),(2,2),(2,3),(3,3)$ refer
to the entries of the adjoint of $xX+yY+zZ$. The
column labels are the monomials $x^2, xy, xz, y^2, yz, z^2$.
The determinant of the Chow matrix is an irreducible homogeneous
polynomial of degree $12$ with $22659$ terms.
It vanishes if and only if
the net contains a rank-$1$ matrix.
Thus, it equals the Chow
form \cite{FKO}  of the Veronese surface in~$\PP^5$. 
\end{example}

The {\em Chow form} of a projective variety is
a hypersurface in the Grassmannian. Its points are
linear spaces of complementary dimension that
unexpectedly intersect the variety. If the linear
space is expressed as the row space of a matrix
then the entries of the matrix are the
{\em dual Stiefel coordinates} of that linear space.

\begin{lemm} \label{lem:stiefel} Consider the net $\mathcal{L} = \{xX + yY+zZ\}$ spanned by
$X,Y,Z \in \SSS^n$.
The Chow matrix is singular if and only if $\mathcal{L}\setminus\{{\bf 0}_n\}$
contains a matrix of rank $\leq n-2$.
So the determinant of the Chow matrix is the Chow form,
written in dual Stiefel coordinates, of the subvariety of $\PP(\SSS^n)$ defined by the
$(n-1) \times (n-1)$-minors.
\end{lemm}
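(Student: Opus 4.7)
The plan is to show that $\Delta(X,Y,Z) := \det\bigl(\mathrm{Chow}(X,Y,Z)\bigr)$ agrees, up to a nonzero scalar, with the Chow form $\mathrm{Ch}_D$ of the codimension-$3$ determinantal variety $D := \{ M \in \PP(\SSS^n) : \rk M \leq n-2 \}$, written in dual Stiefel coordinates $(X,Y,Z)$ on $\Gr(3,\SSS^n)$. Since $\mathrm{Ch}_D$ vanishes on $\Ls$ precisely when $\Ls$ meets $D$, this identification delivers both assertions of the lemma at once.

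First I would dispatch the easy direction. Expanding $\adj(xX+yY+zZ) = \sum_{|\alpha|=n-1} M_\alpha\, x^\alpha$ with $M_\alpha \in \SSS^n$, the columns of the Chow matrix are exactly the coordinate vectors of the $M_\alpha$. If $M_0 = x_0 X + y_0 Y + z_0 Z$ is a nonzero matrix of rank $\leq n-2$ in $\Ls$, then every $(n-1)\times(n-1)$ minor of $M_0$ vanishes, so $\adj(M_0)=0$; this rewrites as $\sum_\alpha x_0^\alpha M_\alpha = 0$, a nontrivial linear dependence among the columns of the Chow matrix. Hence $\Delta$ vanishes whenever $\mathrm{Ch}_D$ does; since $D$ is irreducible, its Chow form $\mathrm{Ch}_D$ is an irreducible polynomial, and the Nullstellensatz yields $\mathrm{Ch}_D \mid \Delta$.

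Next I would compare multidegrees. The entry of the Chow matrix in the column indexed by $x^a y^b z^c$ has tridegree $(a,b,c)$ in the entries of $(X,Y,Z)$; summing over the $\binom{n+1}{2}$ monomials of degree $n-1$ gives $\Delta$ the symmetric tridegree $\bigl(\binom{n+1}{3},\binom{n+1}{3},\binom{n+1}{3}\bigr)$. The Chow form $\mathrm{Ch}_D$ in dual Stiefel coordinates has tridegree $(\deg D,\deg D,\deg D)$, and the J\'ozefiak--Lascoux--Pragacz formula gives $\deg D = \binom{n+1}{3}$. The degrees therefore agree, forcing $\Delta = c \cdot \mathrm{Ch}_D$ for some scalar $c \in \CC$.

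Finally I would verify $c \neq 0$ by exhibiting a single net $\Ls$ whose Chow matrix is invertible. For $n=3$ this reduces to the direct computation showing that the six adjugate entries of $x\mathbf{1}_3 + y(E_{12}+E_{21}) + z(E_{13}+E_{31})$ form a basis of $\CC[x,y,z]_2$; for $n\geq 4$ an analogous explicit triple of symmetric matrices (chosen so that the linear span of $\adj(xX+yY+zZ)$ as $(x,y,z)$ varies is all of $\SSS^n$) completes the argument. This uniform verification of $\Delta \not\equiv 0$, without circular appeal to Theorem~\ref{thm:chow} itself, is the main technical step; everything else is bookkeeping of degrees and a one-line application of the Nullstellensatz.
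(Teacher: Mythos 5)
Your overall strategy --- show that the Chow form $\mathrm{Ch}_D$ divides $\Delta := \det(\mathrm{Chow})$ via the easy implication plus the Nullstellensatz, then match degrees and verify $\Delta\not\equiv 0$ --- is the same one the paper uses. A few points of comparison and one genuine gap.

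Where your write-up is sharper: your easy direction works with an arbitrary nonzero $M_0=x_0X+y_0Y+z_0Z$ of rank $\leq n-2$ and produces the column relation $\sum_\alpha x_0^\alpha M_\alpha=0$, which is cleaner than the paper's route of first reducing to the basis element $X$ via basis-change invariance of $\Delta$. Your tridegree count $(\binom{n+1}{3},\binom{n+1}{3},\binom{n+1}{3})$ is also a refinement; it correctly gives total degree $3\binom{n+1}{3}=(n-1)\binom{n+1}{2}$. (The paper writes $\binom{n+2}{3}$ for $\deg D$ and asserts $(n-1)\binom{n+1}{2}=3\binom{n+2}{3}$; this is a typo --- the correct degree of the symmetric corank-$\geq 2$ locus is $\binom{n+1}{3}$, which is what you use, and then the degree identity actually holds.)

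The genuine gap is in the final step, verifying $\Delta\not\equiv 0$. You correctly identify that the paper merely asserts this, and you rightly insist on an explicit witness. Your $n=3$ example ($\mathbf{1}_3$, $E_{12}+E_{21}$, $E_{13}+E_{31}$) does work: the six adjugate entries are $x^2,\,-xy,\,-xz,\,x^2-z^2,\,yz,\,x^2-y^2$, which span $\CC[x,y,z]_2$. But the ``analogous'' triple for $n\geq 4$ fails: for $n=4$ with $X=\mathbf{1}_4$, $Y=E_{12}+E_{21}$, $Z=E_{13}+E_{31}$, the matrix $xX+yY+zZ$ has a block structure that forces $\adj(\cdot)_{14}\equiv 0$ (indeed $\adj(\cdot)_{i4}\equiv 0$ for $i<4$), so the Chow matrix has a zero row and is singular. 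You need a different family. One choice that does generalize: the anti-tridiagonal net $M_{ij}=x$ if $i+j=n+1$, $M_{ij}=y$ if $i+j=n$, $M_{ij}=z$ if $i+j=n+2$, and $0$ otherwise. Then $MJ_n$ is tridiagonal with $x$ on the diagonal and $y,z$ on the off-diagonals, $\adj(M)$ and $\adj(MJ_n)$ have the same linear span of entries, and the latter span can be checked (or at least one can verify small $n$ and argue by semicontinuity from a single $n$). In any case, until a correct family is specified for all $n$, the proof that $c\neq 0$ is incomplete; this is the same gap the paper leaves, but since you flagged it as the main technical step you should close it.

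Minor bookkeeping: the Nullstellensatz step also requires $\Delta$ to vanish on the locus where $(X,Y,Z)$ are linearly dependent (since $\mathrm{Ch}_D$, being a polynomial in Pl\"ucker coordinates, vanishes there). That is in fact automatic --- if $Z=aX+bY$, the adjugate is a polynomial in only two linear combinations of $x,y,z$, so its coefficients span a space of dimension at most $n<\binom{n+1}{2}$ --- but it is worth a sentence.
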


\begin{proof}
If $X$ has rank $\leq n-2$ then the coefficient of $x^{n-1}$ is zero
in every entry of the adjoint of $xX + yY+zZ$, so the first row
of the Chow matrix is zero. Such a rank drop happens whenever
some matrix in $\mathcal{L}$
has rank $\leq n-2$ since the determinant of the Chow matrix is invariant of the
choice of basis in $\mathcal{L}$. That determinant is a nonzero
homogeneous polynomial
in the entries of $X,Y,Z$ of degree
$(n-1)\binom{n+1}{2}$. We already argued that it vanishes on the hypersurface defined
by the Chow form of the rank-$( n-2)$ variety.
That variety has degree $\binom{n+2}{3}$, so its Chow form
 in dual Stiefel coordinates is an irreducible
polynomial of degree $3 \binom{n+2}{3}$. This irreducible polynomial
divides our determinant. Since $(n-1)\binom{n+1}{2} = 3 \binom{n+2}{3}$,
the two polynomials agree up to a nonzero constant. \end{proof}

\begin{remark}
The Chow matrix is new and interesting even for $n=4$.
It gives the Chow form for
symmetric $4 \times 4$ matrices of rank~$\leq~2$.
A formula for that Chow form was the
main result in \cite[Section 3]{FKO}. Our construction is much simpler.
 \end{remark}
 
\begin{proof}[Proof of Theorem \ref{thm:chow}]
Lemma \ref{lem:stiefel} says that  conditions (1) and (4) are equivalent.
The surface $\mathcal{L}^{-1}$ is parametrized by the entries
of the adjoint of $xX+yY+zZ$.
By inverting the Chow matrix, we can express every monomial
of degree $n-1$ in $x,y,z$ as a linear combination of those entries.
Hence $\mathcal{L}^{-1}$ is linearly isomorphic to the Veronese surface,
so (4) implies  (3). Clearly (3) implies (2). Finally,
 if (4) fails then $\mathcal{L}^{-1}$ lies in a hyperplane in $\PP(\SSS^n)$, so (2) fails.
\end{proof}

\begin{corollary} \label{cor:chowrank}
Given any regular net $\mathcal{L}$ of symmetric $n \times n$ matrices,
the rank of its Chow matrix 
equals the dimension of the linear span of $\mathcal{L}^{-1}$ inside $\SSS^n$.
\end{corollary}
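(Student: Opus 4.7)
The plan is to realize both quantities as dimensions of the same linear subspace of $\SSS^n$. Fix the basis $X,Y,Z$ of $\mathcal{L}$ used to define the Chow matrix and introduce the adjoint parametrization
\[
\phi\colon\CC^3 \longrightarrow \SSS^n, \qquad (x,y,z) \longmapsto \adj(xX+yY+zZ).
\]
Because $\mathcal{L}$ is regular, the locus where $\det(xX+yY+zZ)\neq 0$ is dense in $\CC^3$, and on that locus $\phi(x,y,z)$ is a nonzero scalar multiple of $(xX+yY+zZ)^{-1}$. Since a linear span is invariant under rescaling and under passing to the Zariski closure, the linear span of $\mathcal{L}^{-1}$ inside $\SSS^n$ coincides with the linear span of the image of~$\phi$.

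The second step is to expand $\phi$ in the monomial basis. Writing
\[
\phi(x,y,z) \,=\, \sum_m m(x,y,z)\cdot M_m, \qquad M_m\in\SSS^n,
\]
where $m$ ranges over the $\binom{n+1}{2}$ monomials of degree $n-1$ in $x,y,z$, the symmetric matrix $M_m$ is precisely the column of the Chow matrix labeled by $m$, under the identification $\SSS^n\cong\CC^{\binom{n+1}{2}}$ induced by the $(i,j)$-indexing with $i\le j$. Hence $\spa\{M_m\}_m$, viewed as a subspace of $\SSS^n$, is the column span of the Chow matrix, whose dimension is the rank of the Chow matrix.

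The final step is to show that the linear span of the image of $\phi$ agrees with $\spa\{M_m\}_m$. The inclusion $\subseteq$ is immediate from the monomial expansion. For the reverse inclusion, I would invoke polynomial interpolation: the $\binom{n+1}{2}$ monomials of degree $n-1$ in three variables are linearly independent, so one can choose that many points in $\CC^3$ at which they form a unisolvent family and invert the resulting Vandermonde-type matrix to recover each $M_{m_0}$ as an explicit linear combination of values $\phi(x_i,y_i,z_i)$. This interpolation is the main---still routine---technical point; everything else is bookkeeping. Chaining the three identifications yields $\rk(\text{Chow matrix})=\dim\spa(\mathcal{L}^{-1})$, which is the claim of the corollary.
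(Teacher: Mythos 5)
Your proof is correct, and it is essentially the same argument as the paper's, just phrased dually: the paper identifies the annihilator of the span of $\mathcal{L}^{-1}$ with the left kernel of the Chow matrix, whereas you identify the span itself with the column space of the Chow matrix; both immediately give dimension equal to the rank. The interpolation step you flag is indeed routine (linear independence of the degree-$(n-1)$ monomials as functions), and your first step, that passing from $(xX+yY+zZ)^{-1}$ to $\adj(xX+yY+zZ)$ does not change the linear span, is a detail the paper leaves implicit.
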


\begin{proof}
A linear form vanishes on $\mathcal{L}^{-1}$ if and only if it vanishes 
on the adjoint of $xX + yY+zZ$. These are the
 vectors in the left kernel of the Chow matrix.
\end{proof}

Every regular subspace $\mathcal{L}\subset\SSS^n$ generates a unique Jordan subalgebra
${\rm Jor}(\mathcal{L})$ inside $\SSS^n$.
The dimension of ${\rm Jor}(\mathcal{L})$ is bounded below by the
rank of the Chow matrix of $\mathcal{L}$.
However, this bound is not tight. To see this, consider the~net
\begin{equation}
\label{eq:netrank8}
\mathcal{L} \,\,=\,\,\, \begin{small} \begin{pmatrix} 
\, x +y \, & z & z & 0 \,\, \\
z & \!\! x-y \!\! & 0 & z \,\, \\
z & 0 & x & z \,\,\\
0 & z & z & x \,\, \\
\end{pmatrix} 
\end{small}.
\end{equation}
Here, ${\rm Jor}(\mathcal{L})= \SSS^4$ but
 the $10 \times 10$ Chow matrix has rank~$8$.  Its left kernel
 gives the  two linear forms
 $\, z_{14}-z_{23}-z_{33}+z_{44} \,$ and $\, 2 z_{12}-z_{13}-z_{24}  \,$ which
 vanish on $\mathcal{L}^{-1}$.

\smallskip

We now return to the polynomial equations that vanish on the Jordan locus
in the Grassmannian.
The following result is immediate from Corollary \ref{cor:chowrank}.

\begin{proposition} \label{prop:choweqns} Using dual Stiefel coordinates 
on ${\rm Gr}(m,\SSS^n)$, the Jordan locus  $\,{\rm Jo}(m,\SSS^n)\,$
is cut out by the $(m+1) \times (m+1)$-minors of the Chow matrix.
\end{proposition}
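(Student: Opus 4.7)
The plan is to reduce the proposition to a direct extension of Corollary \ref{cor:chowrank} from nets to arbitrary $m$. First I would generalize the construction of the Chow matrix: given a basis $X_1,\ldots,X_m$ of $\Ls$ and indeterminates $x_1,\ldots,x_m$, form the $\binom{n+1}{2}\times\binom{n+m-2}{m-1}$ matrix whose row indexed by $(i,j)$ with $1\leq i\leq j\leq n$ records the coefficients of the polynomial $\adj(x_1X_1+\cdots+x_mX_m)_{ij}$ in the monomial basis of the space of homogeneous polynomials of degree $n-1$ in the $x_k$. Under a change of basis of $\Ls$ the Chow matrix is right-multiplied by the invertible matrix implementing the induced linear substitution on degree-$(n-1)$ polynomials; its row span, its rank, and the ideal generated by its $(m+1)\times(m+1)$-minors therefore depend only on the point $\Ls\in\Gr(m,\SSS^n)$, so they give well-defined equations in dual Stiefel coordinates.

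The key observation, which is the analogue of Corollary \ref{cor:chowrank} for general $m$, is that a linear form $\sum_{i\leq j}c_{ij}z_{ij}$ on $\SSS^n$ vanishes on the reciprocal variety $\Ls^{-1}$ if and only if $\sum c_{ij}\,\adj(x_1X_1+\cdots+x_mX_m)_{ij}=0$ as a polynomial in the $x_k$, which is exactly the condition that the coefficient vector $(c_{ij})$ lies in the left kernel of the Chow matrix. The argument is identical to the one already given for $m=3$: the surface (for general $m$, the $m$-dimensional variety) $\Ls^{-1}$ is parametrized by the entries of the adjoint, and pulling back linear functions gives the stated pairing. Rank-nullity then yields that the rank of the Chow matrix equals $\dim\spa(\Ls^{-1})$ inside $\SSS^n$.

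To finish, I would invoke Theorem \ref{thm:characterization}. For any regular $\Ls$, the reciprocal $\Ls^{-1}$ is an irreducible $m$-dimensional variety, since inversion $X\mapsto X^{-1}$ is a birational involution of $\SSS^n$ whose restriction to $\Ls_{\rm inv}$ is injective; hence $\dim\spa(\Ls^{-1})\geq m$ always, and consequently the Chow matrix has rank at least $m$. By Theorem \ref{thm:characterization}(a), $\Ls\in\Jo(m,\SSS^n)$ if and only if $\Ls^{-1}$ is itself an $m$-dimensional linear subspace of $\SSS^n$, equivalently $\dim\spa(\Ls^{-1})=m$, equivalently the Chow matrix has rank exactly $m$. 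Thus the Jordan locus is cut out set-theoretically by the vanishing of all $(m+1)\times(m+1)$-minors of the Chow matrix.

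The entire argument is essentially formal once the Chow matrix is extended; the only step that requires genuine care is checking that the extension of Corollary \ref{cor:chowrank} carries over verbatim, which amounts to bookkeeping with the indexing conventions and the behavior under change of basis. There is no deeper obstacle beyond this.
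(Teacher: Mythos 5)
Your proposal is correct and takes essentially the same route as the paper: both reduce the claim to the generalization of Corollary \ref{cor:chowrank} that the rank of the Chow matrix equals $\dim\spa(\Ls^{-1})$, then use that $\Ls^{-1}$ is irreducible of dimension $m$, so it coincides with its (irreducible) span exactly when that span has dimension $m$, i.e.~exactly when the $(m+1)\times(m+1)$-minors vanish. You spell out the generalization of the Chow matrix to arbitrary $m$ and its behavior under change of basis in $\Ls$, details the paper leaves implicit, but the argument is the same.
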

\begin{proof}
For any regular $\Ls\in\Gr(m,\SSS^n)$, the reciprocal variety $\Linv$ is irreducible of dimension $m$ and contained in its span, which is itself an irreducible variety. These two varieties coincide if and only if the latter has dimension~$\leq m$.
\end{proof}

\begin{remark}
These $4 \times 4$-minors do not generate
the prime ideal of ${\rm Jo}(3,\SSS^n)$. For instance, if  
$n=3$ and we fix ${\bf 1}_3 \in \mathcal{L}$ then
Proposition \ref{prop:choweqns} yields $30$ quartics in $x_{ij},y_{ij}$.
These cut out the same variety 
${\rm Jo}_{\bf 1}(3,\SSS^3)$ as the three quadrics in (\ref{eq:buchloe}).
\end{remark}

We know from Theorem \ref{thm:classification m=3,n=4}
that  ${\rm Jo}(3,\SSS^4)$ has two irreducible components,
of types 1(a) and 1(b).
They are given by the nets $\mathcal{L}_1$ and $\mathcal{L}_2$ in the next example.

\begin{example} \label{ex:drei}
We consider the congruence orbits of the following nets in $\SSS^4$:  \vspace{-0.1cm}
$$\mathcal{L}_1 \,\,= \,\,
\begin{small} \begin{pmatrix}
x & 0 & 0 & 0 \\
0 & x & 0 & 0 \\
0 & 0 & y & 0 \\
0 & 0 & 0 & z 
\end{pmatrix} \end{small}\, , \,\,\,
\mathcal{L}_2 \,\,= \,\,
\begin{small} \begin{pmatrix}
x & y & 0 & 0 \\
y & z & 0 & 0 \\
0 & 0 & x & y \\
0 & 0 & y & z 
\end{pmatrix} \end{small}
\, , \,\,\,
\mathcal{L}_3 \,\,= \,\,
\begin{small} \begin{pmatrix}
0 & 0 & x & y \\
0 & -2x & -y & z \\
x & -y & -2z & 0 \\
y & z & 0 & 0 
\end{pmatrix} \end{small}.  \vspace{-0.1cm}
$$
They  have codimensions $9$, $10$ and $9$ in ${\rm Gr}(3,\SSS^4)$.
The first two fill out ${\rm Jo}(3,\SSS^4)$.

It is interesting to  compare the non-diagonalizable nets
$\mathcal{L}_2$ and $\mathcal{L}_3$.
Both represent double conics in $\PP^2$, since
 ${\rm det}(\mathcal{L}_2) =  {\rm det}(\mathcal{L}_3) = (xz-y^2)^2$.
 But, while $\mathcal{L}_2^{-1}$ is linear,
  the Chow matrix of $\mathcal{L}_3$ is invertible, so
 $\mathcal{L}_3^{-1}$ is a Veronese surface.
 The congruence orbit of $\mathcal{L}_3$ is the
 smooth variety of codimension $9$ 
  studied~in \cite{EPS}. It is essentially the
 Hilbert scheme of
  twisted cubic curves in $\PP^3$. 
  \end{example}

We end this section by reporting a challenging computation.
It is predicated on the idea that 
Pl\"ucker coordinates are best when working with Grassmannians.

\begin{example}\label{example: quadrics in plucker}
The $21$-dimensional Grassmannian ${\rm Gr}(3,\SSS^4)$ lives in $\PP^{119}$. 
This space has
 $\binom{10}{3} = 120$ dual Pl\"ucker coordinates $p_{ijk}$,
 one for each  $3 \times 3$-minor~of  \vspace{-0.14cm}
$$ \begin{small} \begin{pmatrix}
\, u_{11} & u_{12} & u_{13} & u_{14} & u_{22} & u_{23} & u_{24} & u_{33} & u_{34} & u_{44} \, \\
\, x_{11} & x_{12} & x_{13} & x_{14} & x_{22} & x_{23} & x_{24} & x_{33} & x_{34} & x_{44} \, \\
\, y_{11} & y_{12} & y_{13} & y_{14} & y_{22} & y_{23} & y_{24} & y_{33} & y_{34} & y_{44}\  \\
\end{pmatrix}. \end{small}  \vspace{-0.14cm}
$$
We computed the quadrics that vanish on subvarieties of ${\rm Gr}(3,\SSS^4)$.
Modulo Pl\"ucker relations, there are $4950$ linearly independent quadrics.
Each quadric has the form $\sum \lambda^{ijk}_{lmn} p_{ijk} p_{lmn}$
where the $\lambda^{ijk}_{lmn}$ are unknown coefficients and
$i \leq l, j \leq m, k \leq n$.
For each of the three orbits in 
Example \ref{ex:drei},
we set up a linear system of equations in the $\lambda^{ijk}_{lmn}$ 
 whose solutions  are quadrics that vanish on that variety.
We then solved these equations using {\tt Maple}. Here are the results:

We found a basis of $85$ quadrics for the orbit of $\mathcal{L}_1$. One representative is
\vspace{-0.16cm}
$$ \begin{small}
\begin{matrix} p_{124} p_{789}-p_{125} p_{589}+p_{125} p_{679}
   -p_{126} p_{678} + p_{127} p_{569} - p_{128} p_{568} - p_{234} p_{579} \\
      + p_{234} p_{678} + p_{235}  p_{479} + p_{235} p_{568}
    - p_{236} p_{478} - p_{236} p_{567} - p_{237} p_{459} + p_{238} p_{458}. \end{matrix}
    \end{small} \vspace{-0.16cm} $$
We found a basis for $189$ quadrics for the orbit of $\mathcal{L}_2$. One representative is \vspace{-0.16cm}
\begin{equation} \label{eq:onerep} \begin{small} \begin{matrix}
 p_{012} p_{018} - p_{012} p_{026} - p_{012} p_{035} - 2 p_{012} p_{123}
     - p_{013} p_{017} + 2 p_{013} p_{025} - p_{023} p_{024}.  \end{matrix}
     \end{small} \vspace{-0.16cm} 
\end{equation}     
Each of these $189$ quadrics also vanishes on $\mathcal{L}_3$. But, there are 
$918$ quadrics vanishing on the orbit of $\mathcal{L}_3$. One representative is
$\, 8 p_{012} p_{457} \,-\, 4 p_{045} p_{057} \,+\, p_{047}^2 $.
To see that $\mathcal{L}_3$ is not contained in $\mathcal{L}_2$,
we can use the quartics in  Proposition~\ref{prop:choweqns}.
\end{example}

\begin{small}

\end{small}

\end{document}